\begin{document}

\newtheorem{theorem}{Theorem}[section]
\newtheorem{result}[theorem]{Result}
\newtheorem{fact}[theorem]{Fact}
\newtheorem{conjecture}[theorem]{Conjecture}
\newtheorem{lemma}[theorem]{Lemma}
\newtheorem{proposition}[theorem]{Proposition}
\newtheorem{corollary}[theorem]{Corollary}
\newtheorem{facts}[theorem]{Facts}
\newtheorem{props}[theorem]{Properties}
\newtheorem*{thmA}{Theorem A}
\newtheorem{ex}[theorem]{Example}
\theoremstyle{definition}
\newtheorem{definition}[theorem]{Definition}
\newtheorem{remark}[theorem]{Remark}
\newtheorem{example}[theorem]{Example}
\newtheorem*{defna}{Definition}

\newcommand{\notes} {\noindent \textbf{Notes.  }}
\newcommand{\note} {\noindent \textbf{Note.  }}
\newcommand{\defn} {\noindent \textbf{Definition.  }}
\newcommand{\defns} {\noindent \textbf{Definitions.  }}
\newcommand{\x}{{\bf x}}
\newcommand{\z}{{\bf z}}
\newcommand{\B}{{\bf b}}
\newcommand{\V}{{\bf v}}
\newcommand{\T}{\mathbb{T}}
\newcommand{\Z}{\mathbb{Z}}
\newcommand{\Hp}{\mathbb{H}}
\newcommand{\D}{\mathbb{D}}
\newcommand{\R}{\mathbb{R}}
\newcommand{\N}{\mathbb{N}}
\renewcommand{\B}{\mathbb{B}}
\newcommand{\C}{\mathbb{C}}
\newcommand{\ft}{\widetilde{f}}
\newcommand{\dt}{{\mathrm{det }\;}}
 \newcommand{\adj}{{\mathrm{adj}\;}}
 \newcommand{\0}{{\bf O}}
 \newcommand{\av}{\arrowvert}
 \newcommand{\zbar}{\overline{z}}
 \newcommand{\xbar}{\overline{X}}
 \newcommand{\htt}{\widetilde{h}}
\newcommand{\ty}{\mathcal{T}}
\renewcommand\Re{\operatorname{Re}}
\renewcommand\Im{\operatorname{Im}}
\newcommand{\tr}{\operatorname{Tr}}
\renewcommand{\skew}{\operatorname{skew}}

\newcommand{\ds}{\displaystyle}
\numberwithin{equation}{section}

\renewcommand{\theenumi}{(\roman{enumi})}
\renewcommand{\labelenumi}{\theenumi}

\title{Spiders' webs of doughnuts}

\author{A. Fletcher}
\email{fletcher@math.niu.edu}
\address{Department of Mathematical Sciences, Northern Illinois University, Dekalb, IL 60115, USA}

\author{D. Stoertz}
\email{dstoertz@cord.edu}

\subjclass[2010]{Primary 37F10; Secondary 30C65, 30D05}
\thanks{The first named author was supported by a grant from the Simons Foundation (\#352034, Alastair Fletcher).}

\begin{abstract}
If $f:\R^3 \to \R^3$ is a uniformly quasiregular mapping with Julia set $J(f)$ a genus $g$ Cantor set, for $g\geq 1$, then for any linearizer $L$ at any repelling periodic point of $f$, the fast escaping set $A(L)$ consists of a spiders' web structure containing embedded genus $g$ tori on any sufficiently large scale. In other words, $A(L)$ contains a spiders' web of doughnuts. This type of structure is specific to higher dimensions, and cannot happen for the fast escaping set of a transcendental entire function in the plane. We also show that if $f:\R^n \to \R^n$ is uqr, for $n\geq 2$ and $J(f)$ is a Cantor set, then every periodic point is in $J(f)$ and is repelling.
\end{abstract}

\maketitle

\section{Introduction}

\subsection{Background} 
Ever since Mandelbrot observed that complicated behavior could occur in the dynamics of the family of quadratic polynomials, there has been interest in the topological structures that can appear in dynamically interesting sets. In fact, it remains perhaps the greatest challenge in rational dynamics to determine whether the Mandelbrot set is locally connected. Of more relevance to this paper, however, are the structures that can occur in the fast escaping set in transcendental dynamics. 

Recall that the fast escaping set $A(f)$ of a transcendental entire function $f$ consists of points which, under iteration, escape to infinity as fast as possible commensurate with the growth of the function. This is a subset of the escaping set $I(f)$, which consists of points whose orbits diverge to infinity. Importantly, the boundary of the fast escaping set coincides with the Julia set $J(f)$. This is the set of chaotic behavior of the iterates of $f$. Typically the Julia set is very complicated, so in computer graphics relations such as equating the Julia set with the boundary of the escaping set are used.

The fast escaping set was first systematically studied by Rippon and Stallard \cite{RS} and has been the subject of much study over the last few years. Two typical classes of structures appear in fast escaping sets. First, hairs are homeomorphic copies of $[0,\infty)$. In the example $f(z) = \lambda e^z$ for $0<\lambda<1/e$ studied by Devaney \cite{Devaney}, the fast escaping set consists of a collection of hairs $\gamma((0,\infty))$ without endpoints together with some of the endpoints. Since the cross-section of $A(f)$ in a typical vertical line segment of height $2\pi$ is a Cantor set, this particular example is sometimes called a Cantor bouquet of hairs.

The second common structure is that of a spiders' web. In the plane, a spiders' web $E$ is a connected set which contains the boundaries of an increasing sequence of nested simply connected domains $(G_n)_{n=1}^{\infty}$, with $\bigcup_{n\geq 1} G_n = \C$. An incomplete selection of recent work on spiders' webs in transcendental dynamics includes \cite{Evdoridou,MBP,Osborne,Sixsmith}. 

\subsection{Quasiregular mappings}
A completely satisfactory generalization of complex dynamics to $\R^n$, $n\geq 2$, is given in the quasiregular setting. Quasiregular mappings are mappings of bounded distortion and while they are only guaranteed to be differentiable almost everywhere, they do share many important value distribution properties with holomorphic functions in the plane. Quasiconformal mappings are nothing other than injective quasiregular mappings. In the following discussion, terms will be defined in section 2.

For arbitrary quasiregular mappings, the normal family machinery from the quasiregular version of Montel's Theorem no longer applies. However, the Julia set for transcendental type mappings was defined by Bergweiler and Nicks \cite{BN} via a blowing-up property, the fast escaping set was defined and shown to be non-empty in work of Bergweiler, Drasin and the first named author \cite{BDF} and it was shown that $J(f) = \partial A(f)$ for functions that do not grow too slowly (for example, it is true for functions of positive lower order of growth) by Bergweiler, Nicks and the first named author \cite{BFN}.

One of the challenges in quasiregular dynamics is constructing mappings to which the theory applies. One way is through the idea of linearization, which we briefly outline. First, uniformly quasiregular mappings (or uqr maps for short) are a special subclass of quasiregular mappings for which there is a uniform bound on the distortion of the iterates. If the map is injective, then it is called uniformly quasiconformal, or uqc. The quasiregular version of Montel's Theorem does apply in this setting and, in particular, shows that the definitions of Fatou and Julia sets through normal families go through just as in complex dynamics. 

For uniformly quasiregular mappings, $f$ may not be differentiable at a fixed point $x_0$ and so a new approach is required. To that end, Hinkkanen, Martin and Mayer \cite{HMM} introduced generalized derivatives, which are always guaranteed to exist.
Given a repelling fixed point $x_0$ of a uqr map $f$ and a generalized derivative $\psi$, \cite{HMM} shows that there exists a transcendental type quasiregular map $L$ so that $f\circ L = L\circ \psi$. We can also consider linearizers at repelling periodic points by replacing $f$ with a suitable iterate. The dynamics of linearizers themselves can be studied, as was first done in the holomorphic setting by Mihaljevi\'{c}-Brandt and Peter \cite{MBP}.

It was observed in \cite{Fletcher} that since $\psi$ is a loxodromic repelling uqc map, it is quasiconformally conjugate to $x\mapsto 2x$ and hence there is a transcendental type quasiregular map $\Psi$ so that $f(\Psi(x)) = \Psi(2x)$. The main results from \cite{Fletcher} studied the case where $J(f)$ is a tame Cantor set, that is, a Cantor set in $\R^n$ so that there is an ambient homeomorphism mapping the Cantor set onto the standard one thirds Cantor set contained in a line. It was then shown that the corresponding map $\Psi$ has a spiders' web structure in its fast escaping set.

\subsection{Cantor sets}

In this paper, we will generalize the results from \cite{Fletcher} to all Cantor sets in $\R^3$ and comment on higher dimensional generalizations. Recall that $X\subset \R^n$ is a tame Cantor set if there is a homeomorphism $\varphi :\R^n \to \R^n$ so that $\varphi(X)$ is the standard ternary Cantor set contained in a line. Cantor sets which are not tame are called wild. Since every Cantor set in $\R^2$ is tame, see \cite{Moise}, we need to look to higher dimensions for examples of wild Cantor sets. 

The first such example is Antoine's necklace, see \cite{An}. This can be defined by taking a solid torus $T$ in $\R^3$, considering a finite collection of conformal contractions $\varphi_1,\ldots, \varphi_m$ so that $\varphi_i(T)$ forms a collection of solid tori contained in $T$ and so that $\varphi_i(T)$ and $\varphi_{i+1}(T)$ are linked via a Hopf link for $i=1,\ldots, m$ modulo $m$. The attractor set of the iterated function system generated by the $\varphi_i$ is then a wild Cantor set.

This idea can be generalized to the notion of a defining sequence for a Cantor set. A defining sequence for a Cantor set $X\subset \R^3$ is a sequence $(M_i)$ of compact $3$-manifolds with boundary such that
\begin{enumerate}[(i)]
\item each $M_i$ consists of pairwise disjoint topological cubes with handles,
\item $M_{i+1} \subset \operatorname{int}(M_i)$ for each $i$,
\item $X = \bigcap_i M_i$.
\end{enumerate}
Every Cantor set in $\R^3$ has a defining sequence, proved by Armentrout \cite{Ar} using different terminology, but the defining sequence is far from uniquely determined.
\v{Z}eljko \cite{Z} introduced the notion of the genus of a Cantor set $X$ as the infimum of the genus required in the handlebodies over all possible defining sequences for $X$. Then the genus of Antoine's necklace is $1$ and the genus of a tame Cantor set is $0$.

\subsection{Statement of results}

We generalize results of \cite{Fletcher}. There the generalized derivative $\varphi$ was conjugated to $x\mapsto 2x$, whereas here we work with the generalized derivative itself. This requires some more work, since the choice $x\mapsto 2x$ is very convenient. In particular, Lemma \ref{lem:varphi} below on controlled growth of loxodromic repelling uqc maps seems to be new and could have further applications.

For our first result, Siebert \cite{Siebert} proved that the Julia set of a uniformly quasiregular mapping is contained in the closure of the periodic points. It is still an open question whether the repelling periodic points are dense in $J(f)$. We show that this is so when $J(f)$ is a Cantor set.

\begin{theorem}
\label{thm:density}
Let $f:\R^n \to \R^n$ be a uqr map of polynomial type, and suppose that $J(f)$ is a Cantor set. Then every periodic point is in $J(f)$ and, moreover, is repelling. The repelling periodic points are thus dense in $J(f)$.
\end{theorem}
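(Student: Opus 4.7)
The plan is to first use the Cantor set hypothesis to trap the Fatou set inside the basin of infinity, ruling out any periodic points in $F(f)$, and then to use the Hinkkanen--Martin--Mayer classification of uqc fixed points \cite{HMM} to force every periodic point in $J(f)$ to be repelling. Siebert's containment $J(f)\subseteq\overline{\mathrm{Per}(f)}$ then upgrades this to the density statement.

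For the first part, since $n\geq 2$ and $J(f)$ is a Cantor set, it is a compact, totally disconnected subset of $\R^n$. It is a standard topological fact (a consequence of \v{C}ech--Alexander duality applied to a $0$-dimensional compactum, or a direct separation argument) that such a set does not disconnect $\R^n$, so $F(f) = \R^n \setminus J(f)$ is connected. Because $f$ is of polynomial type, a neighborhood of $\infty$ is contained in the basin $A(\infty)$, which is therefore a nonempty open subset of $F(f)$. Since $\{f^k\}$ is normal on each Fatou component and converges to $\infty$ on $A(\infty)$, any component meeting $A(\infty)$ lies entirely in $A(\infty)$; connectedness of $F(f)$ then forces $F(f)=A(\infty)$. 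Every orbit in $F(f)$ thus escapes to $\infty$, so no periodic point can belong to $F(f)$, and every periodic point of $f$ lies in $J(f)$.

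For the second part, let $x_0$ be a periodic point of period $p$, set $g=f^p$, and choose a generalized derivative $\psi$ of $g$ at $x_0$ as in \cite{HMM}. Then $\psi$ is a uqc self-map of $\R^n$ fixing the origin, and falls into one of three classes: loxodromic attracting, loxodromic repelling, or elliptic. In the attracting and elliptic cases the iterates of $g$ form a normal family on a neighborhood of $x_0$, forcing $x_0 \in F(g)=F(f)$ and contradicting $x_0\in J(f)$; hence $\psi$ is loxodromic repelling and $x_0$ is a repelling periodic point of $f$. The density assertion is then immediate from Siebert's containment. The main obstacle lies in this last step: one must know that a non-loxodromic-repelling generalized derivative always produces normality of $\{g^k\}$ near $x_0$. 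Such rigidity fails in the holomorphic setting, where parabolic fixed points on $J$ genuinely occur, but in the uqc category it is supplied by \cite{HMM}, where every generalized derivative is shown to be qc-conjugate either to a loxodromic dilation $x\mapsto\lambda x$ with $|\lambda|\neq 1$ or to an orthogonal transformation.
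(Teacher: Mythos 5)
Your first half is correct and matches the paper: $J(f)$ is a Cantor set, hence does not separate $\R^n$ by the duality argument (Proposition \ref{prop:cantor} in the paper), so $F(f)$ is connected and must coincide with the basin of infinity $I(f)$, and therefore no periodic point can lie in $F(f)$. Combined with Siebert's containment $J(f)\subseteq\overline{\mathrm{Per}(f)}$, this gives the density statement \emph{once} one knows periodic points in $J(f)$ are repelling.

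The gap is in the second half. You classify a generalized derivative $\psi\in\mathcal{D}g(x_0)$ as loxodromic attracting, loxodromic repelling, or elliptic, and then assert that in the elliptic case the iterates of $g$ are normal near $x_0$, putting $x_0\in F(f)$. That assertion is exactly the step you flag as the ``main obstacle,'' and the citation to \cite{HMM} does not fill it. What \cite{HMM} proves is a \emph{conjugacy statement about the generalized derivative itself} (it is qc-conjugate to a dilation or to an orthogonal map); it does not say that an elliptic generalized derivative forces local normality of the iterates of the original map $g$. The holomorphic analogue already shows the danger: parabolic and Cremer fixed points have multiplier of modulus one, i.e., ``elliptic'' linear part, yet lie in the Julia set. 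The paper's own introduction emphasizes that density of repelling periodic points for uqr maps is open in general, so a purely local generalized-derivative argument that never invokes the Cantor set hypothesis in the second half cannot work; it would settle the open problem. (You also omit the superattracting case, i.e., $g$ not locally injective at $x_0$, though that one is easily dispatched since it forces $x_0\in F(f)$.)

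The paper instead uses the Cantor set hypothesis a second time, in an essential way. By Lemma \ref{lem:smallnbhd} there are arbitrarily small neighborhoods $U_\delta$ of $x_0$ whose boundaries miss $J(f)$, hence lie in $F(f)=I(f)$ and escape. Using quasiconformality of $F^m$ on a fixed ball around $x_0$ (valid until the orbit leaves that ball), one gets a uniform bound $C^*$ on the distortion ratio $L(x_0,\delta,F^m)/l(x_0,\delta,F^m)$. Taking $\delta$ small and letting $M$ be the first time $F^M(\gamma_\delta)$ reaches a fixed intermediate annulus, the distortion bound forces $\overline{U_\delta}\subset F^M(U_\delta)$, which is the topological definition of a repelling fixed point; \cite[Proposition 4.6]{HMM} then upgrades ``topologically repelling for $F^M$'' to ``repelling periodic point of $f$.'' To repair your proof you would need to replace the elliptic-implies-normality claim with an argument of this kind that genuinely exploits the Cantor structure (the small separating neighborhoods) rather than relying on the infinitesimal classification alone.
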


It is worth observing that while there are rational maps with $J(f)$ a Cantor set in $\overline{\C}$ and $f$ having a parabolic fixed point, these do not contradict Theorem \ref{thm:density}. This is because the hypotheses of Theorem \ref{thm:density} necessarily imply that infinity is a super-attracting fixed point of $f$. Since $F(f)$ consists of one component, it cannot be a parabolic domain.

Next, we show that if $J(f)$ is a Cantor set in $\R^3$, then its defining sequence allows conclusions to be drawn for the topological properties of $A(L)$.

\begin{theorem}
\label{thm:webs}
Let $f:\R^3 \to \R^3$ be a non-injective uqr map with $J(f)$ a Cantor set. Then if $x_0\in J(f)$ is a repelling fixed point, $\varphi \in \mathcal{D}f(x_0)$ and $L$ is a corresponding linearizer satisfying $f\circ L = L\circ \varphi$, then $A(L)$ is a spiders' web. Moreover, if $(M_i)_{i=1}^{\infty}$ is a defining sequence for $J(f)$, and $M_i^{x_0}$ is the component of $M_i$ containing $x_0$, then for all sufficiently large $i$, $A(L)$ contains continua homeomorphic to $\partial M_i^{x_0}$.
\end{theorem}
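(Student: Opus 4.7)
The plan is to build the spiders' web by pulling the defining sequence back through $L$ at the origin and then spreading the pullback outward by iterating $\varphi$, which is loxodromic repelling with fixed point $0$. This directly mimics the tame-Cantor argument of \cite{Fletcher}, except that here $\varphi$ itself (rather than a conjugate of the form $x\mapsto 2x$) has to drive the expansion.

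First I would set up a seed handlebody. For $i$ sufficiently large, $M_i^{x_0}$ lies in a small ball about $x_0$ on which the local branch of $L^{-1}$ sending $x_0$ to $0$ is defined (this is built into the construction of the linearizer at a repelling fixed point). Let $N_i$ be the component of $L^{-1}(M_i^{x_0})$ containing $0$. Then $L$ restricts to a homeomorphism $N_i \to M_i^{x_0}$, so $N_i$ is a handlebody with $\partial N_i \cong \partial M_i^{x_0}$. Each $\varphi^k$ is a self-homeomorphism of $\R^3$, so $V_k := \varphi^k(N_i)$ is a handlebody with $\partial V_k \cong \partial M_i^{x_0}$; since $N_i$ contains $0$ in its interior and $\varphi$ repels at $0$ we obtain a nesting $V_0 \subset V_1 \subset V_2 \subset \cdots$, and Lemma \ref{lem:varphi} implies that the inner radii of the $V_k$ grow without bound, so $\bigcup_k V_k = \R^3$. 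This already furnishes the nested topological skeleton of a spiders' web whose shells are all copies of $\partial M_i^{x_0}$.

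The main analytic task is then to show $\partial V_k \subset A(L)$ for $k$ large. For $y = \varphi^k(y_0)$ with $y_0 \in \partial N_i$ the semi-conjugacy yields
\[
L(y) = f^k(L(y_0)), \qquad L(y_0) \in \partial M_i^{x_0} \subset F(f).
\]
Because $J(f)$ is a Cantor set and $f$ is of polynomial type, infinity is a super-attracting fixed point with $F(f)$ its immediate basin, so $|f^k(L(y_0))|$ grows super-exponentially in $k$, uniformly for $y_0 \in \partial N_i$. Chaining this with transcendental-type growth estimates for $L$ (in the vein of \cite{BDF,BFN}) and the controlled growth information about $\varphi$ from Lemma \ref{lem:varphi} should deliver the standard fast-escape certificate $|L^m(y)| \geq M^m(R,L)$ for all sufficiently large $m$, uniformly in $y \in \partial V_k$.

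The hard part is precisely this last step: the semi-conjugacy handles only one application of $L$, while the subsequent iterates $L^m(y)$ for $m\geq 2$ are no longer captured by it and must be controlled by intrinsic estimates on the transcendental map $L$. This is where Lemma \ref{lem:varphi}, flagged in the introduction as a genuinely new ingredient, is expected to be decisive. Once $\partial V_k \subset A(L)$ is secured, the nested family $\{\partial V_k\}$ establishes that $A(L)$ is a spiders' web; and since $i$ is an arbitrary sufficiently large index and $\partial V_k \cong \partial M_i^{x_0}$, varying $i$ yields continua in $A(L)$ homeomorphic to $\partial M_i^{x_0}$ for all large $i$, which is the second assertion.
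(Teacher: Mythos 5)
Your geometric setup closely matches the paper's construction in Lemma~\ref{lem:reggrowth3}: there too a boundary continuum $\gamma_\delta$ coming from the defining sequence is pulled back through a local branch of $L^{-1}$ near $0$ to a continuum $\Gamma_\delta$, which is then pushed outward by iterates of the generalized derivative $\Phi = \varphi^N$. But the step you yourself flag as ``the hard part''---certifying $\partial V_k \subset A(L)$ by controlling the iterates $L^m(y)$ for $m \geq 2$---is a genuine gap, and Lemma~\ref{lem:varphi} is not the tool that closes it. The semi-conjugacy $f \circ L = L \circ \varphi$ intertwines $\varphi$ with $f$, not $L$ with itself, so a single-application bound on $|L(y)| = |f^k(L(y_0))|$ cannot be chained into a fast-escape certificate for the whole $L$-orbit of $y$. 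Moreover, even if you did secure $\partial V_k \subset A(L)$ for all large $k$, you would still owe an argument that $A(L)$ is connected, which a spiders' web must be.

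The missing ingredient is the characterization of spiders' webs from \cite[Proposition~6.5]{BDF}, recalled here as Lemma~\ref{lem:afchar}: $A_R(L)$ is a spiders' web if and only if there are bounded topologically convex domains $G_m$ with $B(0, M^m(R,L)) \subset G_m$ and $G_{m+1}$ lying in a bounded component of $\R^3 \setminus L(\partial G_m)$. This reduces the problem to controlling \emph{one} application of $L$ on each shell, precisely what the semi-conjugacy delivers: $L(\Gamma^r) = F^{p_2}(L(\Gamma_\delta)) = F^{p_2}(\gamma_\delta)$, whose size is then governed by the polynomial-type growth estimate of Lemma~\ref{lem:holderinf}. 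Lemma~\ref{lem:varphi}'s actual role is to control how many iterates $p_1, p_2$ of $\Phi$ are needed to bring $\Gamma_\delta$ into a prescribed annulus $A(r, r^\mu)$, which feeds into the quantitative Lemmas~\ref{lem:reggrowth}--\ref{lem:reggrowth3} used to verify the nesting condition $m(\Gamma^{r_m}, L) > M(r_m, L) > r_{m+1}^\mu > \max_{x\in\partial G_{m+1}} |x|$. Without this machinery and without the invocation of Lemma~\ref{lem:afchar}, your sketch cannot reach the conclusion.
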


\begin{remark}
\begin{enumerate}[(i)]
\item An analogous result for repelling periodic points can be stated by replacing $f$ with an iterate.
\item We have stated this result in $\R^3$ since defining sequences are known to exist for any Cantor set. We can state an analogous result in $\R^n$, for $n\geq 4$, as long as the assumption is made that a defining sequence exists. The proof is exactly the same.
\item Since defining sequences are far from unique, for example a genus $1$ Cantor set has a defining sequence consisting of genus $g$ tori (by adding suitable small handles to each element of the genus $1$ defining sequence), this result implies that $A(L)$ must contain plenty of interesting structure.
\item The claim that there do indeed exist uqr maps in $\R^3$ with $J(f)$ a Cantor set of genus $g$, for $g\geq 2$, will be left to future work. This is true for $g=1$ by the construction of the first named author and Wu \cite{FW}.
\end{enumerate}
\end{remark}

In the case where $J(f)$ is a genus $g$ Cantor set, we have the following immediate corollary which justifies the title of this paper.

\begin{corollary}
\label{cor:1}
If $f:\R^3 \to \R^3$ is a non-injective uqr map and $J(f)$ is a genus $g$ Cantor set, then $A(L)$ is a spiders' web containing genus $g$ surfaces of arbitrarily large diameter.
\end{corollary}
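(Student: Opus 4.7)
The plan is to derive this directly from Theorem \ref{thm:webs} by choosing a defining sequence for $J(f)$ adapted to the genus $g$ hypothesis. Since $J(f)$ is a genus $g$ Cantor set, \v{Z}eljko's definition supplies a defining sequence $(M_i)$ whose handlebody components all have genus at most $g$, with $g$ attained. Using the flexibility pointed out in Remark (iii) --- that one may add small handles to any component at any level of a defining sequence without altering the intersection --- I would modify $(M_i)$ so that for every sufficiently large $i$ the specific component $M_i^{x_0}$ containing the repelling fixed point $x_0$ is a handlebody of genus exactly $g$. Then $\partial M_i^{x_0}$ is a closed orientable surface of genus $g$.

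With this choice, Theorem \ref{thm:webs} yields two statements in one stroke: $A(L)$ is a spiders' web in $\R^3$, and for every sufficiently large $i$ the set $A(L)$ contains a continuum $\Gamma_i$ homeomorphic to $\partial M_i^{x_0}$, hence a genus $g$ surface embedded in $A(L)$.

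It remains to promote this family of genus $g$ surfaces to one of unbounded diameter. For this I would use the spiders' web structure itself: it provides an increasing sequence of nested bounded domains $G_n$ with $\bigcup_n G_n = \R^3$ and $\partial G_n \subset A(L)$, so necessarily $\operatorname{diam}(\partial G_n) \to \infty$. The main (mild) obstacle is identifying these boundaries $\partial G_n$ with surfaces of genus $g$. I expect this to come for free from the proof of Theorem \ref{thm:webs}, which should construct the spiders' web skeleton as a family of pullbacks of a fixed $\partial M_i^{x_0}$ under iterates of $\varphi$ via the linearizer equation $f\circ L = L\circ \varphi$; since $\varphi$ is repelling these pullbacks attain arbitrarily large scales, and since each is a homeomorphic copy of $\partial M_i^{x_0}$, each has genus $g$. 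Combining these observations produces genus $g$ surfaces in $A(L)$ of unbounded diameter, as required.
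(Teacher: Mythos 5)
The paper gives no explicit proof of Corollary \ref{cor:1}, presenting it as ``immediate'' from Theorem \ref{thm:webs}, and your argument supplies exactly the right details. Two remarks are worth making. First, your observation that one may need to add small handles to $M_i^{x_0}$ is genuinely necessary and not merely a convenience: \v{Z}eljko's genus is a supremum over points of a local genus $g_x(J(f))$, so the local genus at the particular repelling fixed point $x_0$ could be strictly less than $g$, and the handle-adding trick (with handles chosen small enough to keep $\widetilde{M}_{i+1}^{x_0}\subset\operatorname{int}\widetilde{M}_i^{x_0}$ and $\partial\widetilde{M}_i^{x_0}\subset B(x_0,\delta)\cap F(f)$) is what guarantees you can realize genus exactly $g$. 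Second, your diagnosis that the unbounded-diameter claim requires inspecting the proof rather than just the statement of Theorem \ref{thm:webs} is also correct: the statement only asserts that $A(L)$ contains continua homeomorphic to $\partial M_i^{x_0}$, but the proof takes a fixed $\gamma_\delta$ (which one can take to be $\partial\widetilde{M}_i^{x_0}$), sets $\Gamma_\delta = L^{-1}(\gamma_\delta)\cap U$ with $L|_U$ injective, and then forms $\Gamma^{r_m} = \Phi^{p_2}(\Gamma_\delta)$. Since $\Phi^{p_2}$ is a homeomorphism, each $\Gamma^{r_m}$ is a homeomorphic copy of $\gamma_\delta$, hence a genus $g$ surface, and $\Gamma^{r_m}$ separates $S_{r_m}$ from $S_{r_m^\mu}$ with $r_m\to\infty$, so $\operatorname{diam}\Gamma^{r_m}\to\infty$. (One small slip in your description: these are pushforwards under iterates of $\varphi$, not pullbacks, though for a uqc map the distinction is cosmetic.) In short, your proposal is correct and is the intended argument.
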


The authors would like to thank Sarah Koch for motivating a study of linearizers when the Julia set is a wild Cantor set. The results in this paper will form part of the Ph.D. dissertation of the second named author.

\section{Preliminaries}

\subsection{Quasiregular maps and linearizers}

A standard reference for the foundational results of quasiregular mappings is Rickman's monograph \cite{Rickman}.
If $n\geq 2$ and $U\subset \R^n$ is a domain, a continuous mapping $f:U \rightarrow \R^{n}$  is called {\it quasiregular} if $f$ belongs to the Sobolev space $W^{1}_{n, loc}(U)$ and there exists $K \in [1, \infty)$ such that 
\begin{equation}
\label{eq2.1}
\av f'(x) \av ^{m} \leq K J_{f}(x)
\end{equation}
almost everywhere in $U$. Here $J_{f}(x)$ denotes the Jacobian determinant of $f$ at $x \in U$. The smallest constant $K \geq 1$ for which (\ref{eq2.1}) holds is called the {\it outer dilatation} $K_{O}(f)$. If $f$ is quasiregular, then we also have
\begin{equation}
\label{eq2.2}
J_{f}(x) \leq K' \inf _{\av h \av =1} \av f'(x) h \av ^{n}
\end{equation}
almost everywhere in $U$ for some $K' \in[1, \infty)$. The smallest constant $K' \geq 1$ for which (\ref{eq2.2}) holds is called the {\it inner dilatation} $K_{I}(f)$. The {\it maximal dilatation} $K=K(f)$ of $f$ is the larger of $K_{O}(f)$ and $K_{I}(f)$, and we then say that $f$ is $K$-quasiregular if $K(f)\leq K$.

A quasiregular map $f:\R^n \to \R^n$ is said to be of {\it transcendental type} if $f$ has an essential singularity at infinity. On the other hand, if $|f(x)|\to \infty$ as $|x|\to \infty$, then $f$ is said to be of {\it polynomial type}.
The composition of two quasiregular mappings is again quasiregular, although typically the distortion goes up. Denoting by $f^m$ the $m$-fold iterate of $f$, we say that $f$ is {\it uniformly $K$-quasiregular}, or $K$-uqr for short, if $K(f^m) \leq K$ for all $m\in \N$. If we do not specify the $K$, we may call the map uqr.

If $f:\R^n \to \R^n$ is uqr, then space decomposes into the {\it Julia set} $J(f)$ and the {\it Fatou set} $F(f)$ precisely as for holomorphic functions in the plane. More precisely, $x\in F(f)$ if and only if there is a neighborhood $U$ of $x$ so that the family $\{ f^m|_U : m\in \N \}$ is normal. Further, $x\in J(f)$ if and only if no such neighborhood can be found. These definitions were made by Iwaniec and Martin \cite{IM}.

If $n\geq 2$ is fixed, $x_0 \in \R^n$ and $0<r<s$, denote by $A(x_0,r,s)$ the ring domain
\[ A(x_0,r,s) = \{ x\in \R^n : r<|x-x_0| <s\}. \]
If $x_0=0$, then we write $A(r,s)$ for $A(0,r,s)$.
If $r>0$, let $S_r$ the sphere
\[ S_r  = \{ x\in \R^n : |x| = r\}.\]
If $f: \R^n \to \R^n$ is quasiregular and $r>0$, we define the {\it maximum modulus} $M(r,f)$ and the {\it minimum modulus} $m(r,f)$ by
\[ M(r,f) = \sup\{ |f(x)| : |x| = r\}, \quad \text{ and } \quad m(r,f) = \inf \{ |f(x)| : |x|=r \}\]
respectively. If $X$ is a bounded set, then we define
\[ M(X,f) = \sup \{ |f(x) | : x\in X \}, \quad \text{ and } \quad m(X,f) = \inf \{ |f(x)| : x\in X \}.\]
The iterated maximum modulus $M^m(r,f)$ is defined for $m\geq 2$ inductively by $M^m(r,f) = M^{m-1}(M(r,f) ,f)$.

If $f:\R^n \to \R^n$ is quasiregular, then the {\it escaping set} $I(f)$ is defined by
\[ I(f) = \{x\in \R^n : f^m(x) \to \infty \}.\]
For $R$ large enough that $M^m(R,f) \to \infty$, define $A_R(f)$ to be
\[ A_R(f) = \{ x\in \R^n : |f^m(x)| \geq M^m(R,f), m\in\N \}.\]
For any such $R$, the {\it fast escaping set} $A(f)$ is defined by
\[ A(f) = \{ x\in \R^n: \exists P\in \N, |f^{m+P}(x)| \geq M^m(R,f), m\in \N \}.\]

For the following definitions and results on generalized derivatives of uqr maps, we refer to \cite{HMM}.
If $f:\R^n \to \R^n$ is uqr and $x_0$ is a fixed point of $f$, then a {\it generalized derivative} $\varphi$ of $f$ is any local uniform limit of the form
\[ \varphi(x) = \lim_{k\to \infty} \frac{f(x_0 + s_k x) - f(x_0)}{s_k},\]
where $s_k \to 0$ as $k\to \infty$. Every such $\varphi$ is a uniformly quasiconformal map. There may be more than one generalized derivative at a given fixed point. The collection of generalized derivatives of $f$ at $x_0$ is called the {\it infinitesimal space} and denoted $\mathcal{D}f(x_0)$. 

\begin{definition}[Classification of fixed points]
\label{def:fix}
If $n\geq 2$ and $f:\R^n \to \R^n$ is uqr with fixed point $x_0$, then $x_0$ is said to be:
\begin{enumerate}[(i)]
\item {\it repelling} if one, and in fact all, generalized derivatives in $Df(x_0)$ are loxodromic repelling, that is, every $\varphi\in Df(x_0)$ satisfies $\varphi(0) = 0$ and $\varphi^m(x) \to \infty$ as $m\to \infty$ for $x\neq 0$;
\item {\it attracting} if one, and in fact all, generalized derivatives in $Df(x_0)$ are loxodromic attracting, that is, every $\varphi\in Df(x_0)$ satisfies $\varphi(0)=0$ and $\varphi^m(x) \to 0$ as $m\to \infty$ for $x\neq 0$;
\item {\it neutral} if the elements of $Df(x_0)$ are elliptic;
\item {\it superattracting} if $f$ is not locally injective at $x_0$.
\end{enumerate}
\end{definition}
The first two cases here agree with the topological definition of repelling and attracting fixed points; namely, a fixed point $x_0$ is called topologically repelling (respectively, attracting) if there is a neighborhood $U$ of $x_0$ so that $f$ is injective on $U$ and $f(U) \supset \overline{U}$ (respectively, $\overline{f(U)} \subset U$).

If $\varphi$ is a generalized derivative of $f$ at a repelling fixed point $x_0$, then there is a transcendental type quasiregular map $L$ satisfying $f\circ L=  L \circ \varphi$ in $\R^n$. Such a map $L$ is not unique, and we call any $L$ satisfying this functional equation a linearizer of $f$ at $x_0$. 
Note that these results are generalizations of this situation in the plane: if a holomorphic map $f$ has a repelling fixed point $z_0$ with multiplier $\lambda = f'(z_0)$ then there exists an entire Poincar\'e linearizer $L$ such that $f(L(z)) = L(\lambda z)$ for all $z\in \C$.

\subsection{Results needed for the proofs}

The following result says that global quasiconformal mappings are global quasisymmetries.

\begin{theorem}[Theorem 11.14, \cite{Heinonen}]
\label{thm:qs}
Let $n\geq 2$ and $K\geq 1$. There exists an increasing homeomorphism $\eta :[0,\infty) \to [0,\infty)$ depending only on $n$ and $K$ so that if $f:\R^n\to \R^n$ is $K$-quasiconformal, then
\[ \frac{ |f(x) - f(y)|}{|f(x) - f(z)|} \leq \eta \left ( \frac{ |x-y|}{|x-z|} \right),\]
for all $x,y,z \in \R^n$.
\end{theorem}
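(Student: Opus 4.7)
The plan is to deduce the quasisymmetry bound from the quasi-invariance of the conformal modulus of rings under $K$-quasiconformal maps. The global hypothesis on $f$ ensures that $f$ extends to a homeomorphism of $\overline{\R^n}$ fixing $\infty$, and that for every ring domain $R \subset \R^n$ one has
\[ \tfrac{1}{K}\operatorname{mod}(R) \leq \operatorname{mod}(f(R)) \leq K\operatorname{mod}(R). \]
Given distinct $x,y,z \in \R^n$, set $t = |x-y|/|x-z|$; the task is to bound $|f(x)-f(y)|/|f(x)-f(z)|$ by some increasing function $\eta(t)$ depending only on $n$ and $K$.

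For the main case $t \leq 1$, the strategy is to invoke the classical Teichmüller capacity in $\R^n$: among all rings in $\overline{\R^n}$ separating a pair $\{a,b\}$ from another pair $\{c,\infty\}$, the extremal modulus is a concrete decreasing function $\tau_n$ of the M\"obius invariant $|a-c|/|a-b|$, with $\tau_n$ continuous and strictly monotone and satisfying $\tau_n(s)\to\infty$ as $s\to 0^+$ and $\tau_n(s)\to 0$ as $s\to\infty$. Applied to the source configuration, one obtains a ring $R \subset \R^n$ separating $\{x,y\}$ from $\{z,\infty\}$ with $\operatorname{mod}(R)$ essentially equal to $\tau_n(1/t)$. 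The image $f(R)$ separates $\{f(x),f(y)\}$ from $\{f(z),\infty\}$, so by extremality
\[ \operatorname{mod}(f(R)) \leq \tau_n\!\left(\tfrac{|f(x)-f(z)|}{|f(x)-f(y)|}\right). \]
Combining with the quasi-invariance inequality yields
\[ \tau_n\!\left(\tfrac{|f(x)-f(z)|}{|f(x)-f(y)|}\right) \geq \tfrac{1}{K}\,\tau_n(1/t), \]
and inverting the decreasing function $\tau_n$ produces the desired bound $|f(x)-f(y)|/|f(x)-f(z)| \leq \eta(t)$, where $\eta$ is constructed from $\tau_n$, $\tau_n^{-1}$ and $K$, hence depends only on $n$ and $K$. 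For $t > 1$ the inequality follows either by swapping the roles of $y$ and $z$ and inverting the estimate just proved, or by a trivial monotone extension of $\eta$ to large arguments.

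The main obstacle, and the technical heart of the argument, is establishing the required properties of the Teichm\"uller capacity $\tau_n$: its continuity, strict monotonicity and correct limiting behavior, together with the extremality and comparison statements that identify $\tau_n$ as the modulus of the optimal separating ring. These are classical facts in the geometric theory of quasiconformal mappings, proved by spherical symmetrization and explicit modulus estimates for concrete rings. Once these are granted, the quasisymmetry inequality is obtained by the pure modulus bookkeeping above, with the control function $\eta$ manifestly independent of the particular map $f$.
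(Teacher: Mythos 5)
The paper cites this result from Heinonen's book without proving it, so there is no in-paper argument to compare against; the modulus-theoretic route you take (quasi-invariance of modulus/capacity plus Teichm\"uller extremality) is indeed the standard one. However, as written your bookkeeping has a genuine direction error. You declare $\tau_n(s)$, the extremal quantity for rings separating $\{a,b\}$ from $\{c,\infty\}$ with $s=|a-c|/|a-b|$, to be \emph{decreasing} with $\tau_n(s)\to\infty$ as $s\to 0^+$ --- that is the behavior of the Teichm\"uller \emph{capacity} --- yet you then use $\tau_n$ as if it were the extremal \emph{modulus}, which is \emph{increasing} in $s$ (writing $\operatorname{mod}(R)\approx\tau_n(1/t)$ and $\operatorname{mod}(f(R))\le\tau_n(\cdot)$). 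Tracing your stated monotonicity through the final step, $\tau_n\bigl(|f(x)-f(z)|/|f(x)-f(y)|\bigr)\ge(1/K)\tau_n(1/t)$ with $\tau_n$ decreasing gives $|f(x)-f(z)|/|f(x)-f(y)|\le\tau_n^{-1}\bigl((1/K)\tau_n(1/t)\bigr)$, i.e.\ a \emph{lower} bound on $|f(x)-f(y)|/|f(x)-f(z)|$, which is the reverse of the required inequality. Taking $\tau_n$ to be the increasing extremal Teichm\"uller modulus (with the limits reversed) repairs this step, since then the same chain bounds $|f(x)-f(z)|/|f(x)-f(y)|$ from below.

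There is also a gap in the case $t>1$. Swapping $y$ and $z$ and applying the $t\le 1$ estimate bounds $|f(x)-f(z)|/|f(x)-f(y)|$ from above, which after inversion again yields only a \emph{lower} bound on the ratio you need to control; and no ``trivial monotone extension'' can supply the missing case, since the control function $\eta$ must tend to $\infty$ with $t$ (already for $f=\mathrm{id}$, the ratio equals $t$). The standard remedy is to apply the $t\le 1$ estimate to the $K$-quasiconformal inverse $f^{-1}$ at the triple $(f(x),f(z),f(y))$ and solve for the desired ratio.
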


We will need the following result concerning the growth of polynomial type quasiregular mappings, see \cite[Lemma 2.3]{Fletcher}

\begin{lemma}
\label{lem:holderinf}
Let $h:\R^n \to \R^n$ be a $K$-quasiregular mapping of polynomial type of degree $d>K$. Then there exist $R_0>0$ and positive constants $C_1,C_2$ such that
\[ C_1 ^{ q_j ( (d/K)^{1/(n-1)} ) } |x|^{ (d/K)^{j/(n-1)} }
\leq |h^j(x)|
\leq C_2 ^{ q_j ( (dK)^{1/(n-1)} ) } |x|^{ (dK)^{j/(n-1)} }, \]
for $|x| >R_0$, where $q_j$ is the polynomial $q_j(y) = y^{j-1} + y^{j-2} + \ldots + y +1$.
\end{lemma}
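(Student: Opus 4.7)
The plan is to establish a single-step version of the estimate via standard growth results for polynomial-type quasiregular maps, and then iterate. For the single step, I would invoke the fact that if $h:\R^n \to \R^n$ is $K$-quasiregular of polynomial type of degree $d$, then there exist $R_0 > 0$ and positive constants $c_1, c_2$ with
\[ c_1 |x|^{(d/K)^{1/(n-1)}} \leq |h(x)| \leq c_2 |x|^{(dK)^{1/(n-1)}} \]
for $|x| > R_0$. The upper bound follows from the polynomial-type assumption combined with control on $M(r,h)$ coming from the outer dilatation, while the lower bound is the standard minimum modulus estimate for quasiregular maps controlled by the inner dilatation; both rest on the capacity machinery in Rickman's monograph. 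The hypothesis $d > K$ guarantees that $\alpha := (d/K)^{1/(n-1)} > 1$, so that the map genuinely expands at infinity.

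Next I would enlarge $R_0$, if necessary, so that $c_1 R_0^{\alpha - 1} \geq 1$. This forces $|h(x)| \geq c_1 |x|^\alpha \geq |x| > R_0$ whenever $|x| > R_0$, and consequently all forward iterates $h^j(x)$ remain in the region where the single-step bounds apply. This is the one technical point that requires care: without it, the induction below could fail once an iterate leaves the region of validity.

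I would then induct on $j$, setting $\beta := (dK)^{1/(n-1)}$. The base case $j = 1$ holds since $q_1(y) = 1$. Assuming the upper bound $|h^{j-1}(x)| \leq c_2^{q_{j-1}(\beta)} |x|^{\beta^{j-1}}$, a single application of the step estimate yields
\begin{align*}
|h^j(x)| \leq c_2 |h^{j-1}(x)|^\beta \leq c_2^{1 + \beta q_{j-1}(\beta)} |x|^{\beta^j} = c_2^{q_j(\beta)} |x|^{\beta^j},
\end{align*}
using the identity $1 + y q_{j-1}(y) = q_j(y)$. An entirely analogous induction, with $c_1$ and $\alpha$ replacing $c_2$ and $\beta$, delivers the lower bound. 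Taking $C_1 = c_1$ and $C_2 = c_2$ concludes the argument.

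The main obstacle is really only the single-step growth estimate itself, which is the quantitative heart of the matter; once in hand, the iteration is elementary bookkeeping around the polynomial $q_j$. The only subtle point in that bookkeeping is to choose $R_0$ large enough to secure both the validity of the step inequality and the invariance of $\{|x| > R_0\}$ under $h$.
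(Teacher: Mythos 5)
The paper does not prove this lemma itself; it quotes it as Lemma 2.3 of \cite{Fletcher}, so there is no in-paper proof to compare against. Your reconstruction is correct and is almost certainly what that reference does: establish the single-step polynomial-type growth estimate
$c_1|x|^{(d/K)^{1/(n-1)}}\leq|h(x)|\leq c_2|x|^{(dK)^{1/(n-1)}}$
for $|x|>R_0$ (this rests on the modulus/capacity machinery and the degree bound, as you indicate), then enlarge $R_0$ so that $c_1R_0^{\alpha-1}\geq 1$ with $\alpha=(d/K)^{1/(n-1)}>1$, which is where the hypothesis $d>K$ enters, and iterate using the recursion $q_j(y)=1+y\,q_{j-1}(y)$. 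The forward-invariance of $\{|x|>R_0\}$ under $h$ is indeed the only point that requires care, and you handle it exactly as needed so that the single-step estimate can be reapplied at every stage.
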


We recall the following definition from \cite{BDF}. 

\begin{definition}
A set $E\subset \R^n$ is a \emph{spiders' web} if $E$ is connected and there exists a sequence $(G_m)_{m\in\N}$ of bounded topologically convex
domains (that is, the complement has no bounded components) satisfying $G_m \subset G_{m+1}$, $\partial G_m \subset E$ 
for $m\in \N$ and such that $\bigcup_{m \in \N} G_m = \R^n$.
\end{definition}

We will use the following characterization of spiders' webs in $A_R(f)$, see \cite[Proposition 6.5]{BDF}

\begin{lemma}
\label{lem:afchar}
Let $f\colon \R^n\to\R^n$ be a quasiregular mapping of transcendental type,
$R_0$ sufficiently large and $R>R_0$.
Then $A_R(f)$ is a spiders' web if and only if there exists a sequence
$(G_m)_{m=1}^{\infty}$ of bounded topologically convex domains such that, for all $m \in\N$,
\[ B(0,M^m(R,f)) \subset G_m,\]
and $G_{m+1}$ is contained in a bounded component of $\R^n \backslash f(\partial G_m)$.
\end{lemma}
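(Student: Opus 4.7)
The statement is a biconditional; I would treat the two directions separately.

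Forward direction: Assume $A_R(f)$ is a spiders' web, with defining sequence of bounded topologically convex domains $(D_k)$ satisfying $\partial D_k \subset A_R(f)$ and $\bigcup D_k = \R^n$. Since the iterated maximum modulus $M^m(R,f) \to \infty$, I select a subsequence $G_m := D_{k(m)}$ with $B(0,M^m(R,f)) \subset G_m$, refining $k(m)$ further so that $|f(x)| \geq M^{m+1}(R,f)$ holds on $\partial G_m$. Such a refinement is possible because each $x \in \partial G_m \subset A_R(f)$ already satisfies $|f(x)| \geq M(R,f)$ and, by taking $k(m)$ large, the growth of $|f|$ on the fast escaping set exceeds any prescribed threshold. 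With this choice, $f(\partial G_m)$ lies outside $B(0,M^{m+1}(R,f)) \subset G_{m+1}$, and a topological degree argument shows $f(\partial G_m)$ winds nontrivially around points of $G_{m+1}$, so $G_{m+1}$ sits in a bounded component of $\R^n \setminus f(\partial G_m)$.

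Backward direction: Given the sequence $(G_m)$, the plan is to prove $\partial G_m \subset A_R(f)$ for every $m$; this, together with the nesting and covering property $\bigcup G_m = \R^n$, immediately yields that $A_R(f)$ is a spiders' web. For $x \in \partial G_m$, the hypothesis gives $f(x) \in f(\partial G_m) \subset \R^n \setminus G_{m+1} \subset \R^n \setminus B(0,M^{m+1}(R,f))$, so $|f(x)| \geq M^{m+1}(R,f) \geq M(R,f)$. For higher iterates I would first establish $f(G_m) \supset G_{m+1}$ by observing that, since $f(\partial G_m)$ surrounds every $p \in G_{m+1}$, the restriction $f|_{\partial G_m}$ has nonzero topological degree relative to $p$, forcing $p \in f(G_m)$; by induction, $f^k(G_m) \supset G_{m+k}$ for all $k \geq 0$. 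Using that $f^k$ is continuous on the compact set $\overline{G_m}$, we have $\partial f^k(G_m) \subset f^k(\partial G_m)$, and combined with $G_{m+k} \subset f^k(G_m)$ this shows $f^k(\partial G_m)$ separates $G_{m+k}$ from infinity, yielding the desired bound $|f^k(x)| \geq M^{m+k}(R,f) \geq M^k(R,f)$.

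The main obstacle I anticipate is that, a priori, $f^k(\partial G_m)$ may contain points interior to $G_{m+k}$ arising from the non-injectivity of iterated quasiregular maps, so the pointwise bound $|f^k(x)| \geq M^{m+k}(R,f)$ for every single $x \in \partial G_m$ is not automatic. Resolving this should exploit that $\partial G_m$ is a topological $(n-1)$-sphere (since $G_m$ is topologically convex and bounded) and that each iterate $f^k$ is discrete and open as a composition of quasiregular maps, so a branched-covering and winding-number argument forces the image of each point of $\partial G_m$ to lie on the outer boundary of $f^k(G_m)$, hence outside $G_{m+k}$. Finally, the connectedness of $A_R(f)$ required by the spiders' web definition follows from the fact that $A_R(f)$ contains the nested connected sets $\partial G_m$ together with standard dynamical arguments using preimages under $f$.
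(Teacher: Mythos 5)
This lemma is not proved in the paper: it is quoted verbatim from Bergweiler--Drasin--Fletcher as \cite[Proposition~6.5]{BDF}, so there is no in-paper proof to compare against. Your blind attempt, however, contains a genuine gap in the backward (``if'') direction. You set out to show $\partial G_m \subset A_R(f)$, i.e.\ that $|f^k(x)| \geq M^k(R,f)$ for every $x \in \partial G_m$ and every $k$. The hypothesis that $G_{m+1}$ lies in a bounded component of $\R^n \setminus f(\partial G_m)$, together with $B(0,M^{m+1}(R,f)) \subset G_{m+1}$, does give $|f(x)| \geq M^{m+1}(R,f)$, i.e.\ the case $k=1$. But nothing in the hypotheses forces $f^2(x)$ out of $G_{m+2}$: the separating condition only constrains $f(\partial G_{m+1})$, whereas $f(\partial G_m)$ is merely some compact set in $\R^n \setminus G_{m+1}$ and need not lie on or near $\partial G_{m+1}$. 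Your proposed branched-covering/winding-number repair cannot close this: openness and discreteness of $f$ and a containment $G_{m+k} \subset f^k(G_m)$ do not prevent $f$ from sending individual points of $f(\partial G_m)$ deep inside $G_{m+2}$, so the pointwise inequality for $k\geq 2$ simply does not follow. In fact $\partial G_m \subset A_R(f)$ is not a consequence of the hypotheses at all. The correct argument (in \cite{BDF}, adapted from Rippon--Stallard) never asserts it; instead one builds a \emph{new} sequence of topologically convex domains $H_m\subset G_m$ by iterated pullback, taking for each $m$ nested separating continua inside $\{x\in\overline{G_m}: f^j(x)\notin G_{m+j},\ 1\leq j\leq k\}$ and passing to the limit as $k\to\infty$, which yields $\partial H_m \subset A_R(f)$ and the required exhaustion.

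The forward direction is also incomplete: you ``refine $k(m)$ so that $|f(x)| \geq M^{m+1}(R,f)$ on $\partial G_m$,'' but membership in $A_R(f)$ only gives $|f(x)| \geq M(R,f)$, and having $|x|\geq M^m(R,f)$ does not upgrade this, because $A_R(f)$ is defined with the fixed reference radius $R$, not with $|x|$. The statement you actually need---that $G_{m+1}$ lies in a bounded component of $\R^n\setminus f(\partial G_m)$---requires a separation/degree argument using that $f(\partial G_m)$ is a compact connected subset of $A_{M(R,f)}(f)$ which itself has the spiders' web structure, not just that the image avoids a large ball. As written, both directions rest on unsupported quantitative claims and the proof cannot be considered correct.
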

 
If $A_R(f)$ is a spiders' web, then so is $A(f)$, since $A_R(f) \subset A(f)$ and every component of $A(f)$ is unbounded, see \cite{BDF}.
We require the following lemma on rescaling generalized derivatives.

\begin{lemma}
\label{lem:rescale}
Let $f:\R^n \to \R^n$ be uqr, $f(0)=0$, $f$ is locally injective at $0$ and suppose $\varphi \in \mathcal{D}f(0)$ arises as a local uniform limit of $f(s_kx)/s_k$ as $s_k\to 0$. Then if either $r_j \to \infty$ is an increasing sequence, or $r_j\to 0$ is a decreasing sequence, any locally uniform limit $\varphi_0$ of a convergent subsequence of $\varphi(r_jx)/r_j$ is also in $\mathcal{D}f(0)$.
\end{lemma}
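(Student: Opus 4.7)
The plan is to exploit the identity
\[ \frac{\varphi(r_j x)}{r_j} = \lim_{k \to \infty} \frac{f(s_k r_j x)}{s_k r_j}, \]
which is just a rearrangement of the hypothesis that $\varphi$ is the local uniform limit of $f(s_k x)/s_k$. This realizes any rescaling of $\varphi$ as a double rescaling of $f$, and a diagonal argument in $(k,j)$ should produce a single sequence $t_j = s_{k_j} r_j \to 0$ along which $f(t_j x)/t_j \to \varphi_0(x)$ locally uniformly. Once that is in hand, $\varphi_0 \in \mathcal{D}f(0)$ follows directly from the definition of the infinitesimal space.

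Concretely, first pass to the subsequence (without relabelling) along which $\varphi(r_j x)/r_j \to \varphi_0(x)$ locally uniformly; this is available by hypothesis. For each fixed $j$, the local uniform convergence $f(s_k r_j x)/(s_k r_j) \to \varphi(r_j x)/r_j$ as $k \to \infty$ lets me choose $k_j$ large enough that
\[ \sup_{|x| \leq j} \left| \frac{f(s_{k_j} r_j x)}{s_{k_j} r_j} - \frac{\varphi(r_j x)}{r_j} \right| < \frac{1}{j} \]
and simultaneously $s_{k_j} r_j < 1/j$. Setting $t_j = s_{k_j} r_j$, the triangle inequality on each closed ball $\overline{B(0,R)}$ yields, for all $j \geq R$,
\[ \sup_{|x| \leq R} \left| \frac{f(t_j x)}{t_j} - \varphi_0(x) \right| \leq \frac{1}{j} + \sup_{|x| \leq R} \left| \frac{\varphi(r_j x)}{r_j} - \varphi_0(x) \right|, \]
and the right-hand side tends to $0$ as $j \to \infty$. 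Hence $\varphi_0$ is realized as a generalized derivative of $f$ at $0$.

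The only real obstacle is ensuring $t_j \to 0$. When $r_j \to 0$ this is automatic for any admissible choice of $k_j$, so the substance of the lemma lies in the case $r_j \to \infty$, where one must force the diagonal indices $k_j$ to grow fast enough that $s_{k_j}$ decays faster than $r_j$ blows up. Because $s_k \to 0$ for each fixed $r_j$, the auxiliary inequality $s_{k_j} < 1/(j r_j)$ can always be arranged, so both regimes of the lemma are handled by the same diagonal construction. The local injectivity of $f$ at $0$ is not used in the construction itself; it is a standing hypothesis ensuring that $\mathcal{D}f(0)$ consists of honest (loxodromic) quasiconformal maps, so that the statement $\varphi_0 \in \mathcal{D}f(0)$ is meaningful.
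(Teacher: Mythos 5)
Your proof is correct and follows essentially the same diagonal argument as the paper: rewrite $\varphi(r_jx)/r_j$ as a limit of $f(s_kr_jx)/(s_kr_j)$, then for each $j$ choose $k_j$ large enough that the approximation is within $1/j$ on $\overline{B(0,j)}$ and $t_j := s_{k_j}r_j < 1/j$, and conclude by the triangle inequality. The paper treats the two cases $r_j \to \infty$ and $r_j \to 0$ in separate paragraphs with an $\epsilon$-style bookkeeping, but the content is identical; your explicit choice of the diagonal sequence $t_j$ and the observation that the $s_{k_j}r_j < 1/j$ constraint unifies the two cases is, if anything, a slightly cleaner presentation of the same idea.
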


\begin{proof}
We first assume that $r_j\to \infty$ is an increasing sequence.
Fix $R>0$. Given $\epsilon >0$, there exists $J\in \N$ so that if $j\geq J$ and $|x|\leq R$ then
\begin{equation}
\label{eq:rescale1} 
\left | \varphi_0(x) - \frac{\varphi(r_jx)}{r_j} \right | <\frac{\epsilon}{2}.
\end{equation}
Now fix $j\geq J$. Since $f(s_kx)/s_k \to \varphi$ locally uniformly on $\R^n$, then there exists $K(j) \in \N$ depending on $j$ so that $r_js_{K(j)}\to 0$ as $j\to \infty$ and  if $|x|\leq R$, that is $|r_jx| \leq r_jR$, then
\begin{equation}
\label{eq:rescale2}
\left | \frac{\varphi(r_jx)}{r_j} - \frac{f(s_kr_jx)}{s_kr_j} \right | < \frac{\epsilon}{2},
\end{equation}
for $k\geq K(j)$.
Hence \eqref{eq:rescale1} and \eqref{eq:rescale2} imply that if $j\geq J$ and $k\geq K(j)$ then
\[ \left | \varphi_0(x) - \frac{f(s_kr_jx)}{s_kr_j} \right | <\epsilon.\]
This implies that $\varphi_0 \in \mathcal{D}f(0)$.

Next, we assume that $r_j\to 0$ is a decreasing sequence. We again suppose that $R>0$ is fixed and that \eqref{eq:rescale1} holds for $|x|\leq R$ and $j\geq J$. Now fix $j\geq J$. Since $f(r_js_kx)/(r_js_k)$ converges locally uniformly to $\varphi(r_jx)/r_j$ as $k\to \infty$, find $K(j) \in \N$ large enough so that for $k\geq K(j)$,
\[ \left | \frac{\varphi(r_jx)}{r_j} - \frac{f(s_kr_jx)}{s_kr_j} \right | < \frac{\epsilon}{2} \]
for $|x|\leq R$. As above, we conclude that $\varphi_0 \in \mathcal{D}f(x_0)$.
\end{proof}

\begin{lemma}
\label{lem:varphi}
Suppose that $x_0$ is a repelling fixed point of a uqr map $f$ and that $\varphi\in \mathcal{D}f(x_0)$ is a loxodromically repelling uqc map. Then:
\begin{enumerate}[(a)]
\item there exist $C_3>1$ and $N\in \N$ so that $2|x| \leq |\varphi^N(x)| \leq C_3 |x|$ for all $x$.
\item Given $0<s<t$, there exists $C_4>1$ so that for all $m\in\N$ we have
\[ \frac{M(t,\varphi^m)}{m(s,\varphi^m)} \leq C_4.\]
\end{enumerate}
\end{lemma}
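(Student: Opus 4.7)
Both parts follow from Theorem \ref{thm:qs} (quasisymmetry of $K$-quasiconformal self-maps of $\R^n$). Part (b) applies it directly to the iterates $\varphi^m$, while part (a) first uses the quasiconformal conjugacy of a loxodromic repelling uqc map to the scaling $\sigma(x)=2x$ noted earlier in the excerpt following \cite{Fletcher}.

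For part (b), the uqc hypothesis on $\varphi$ gives a single $K$ for which every $\varphi^m$ is $K$-quasiconformal, and hence by Theorem \ref{thm:qs} every $\varphi^m$ is $\eta$-quasisymmetric for one and the same control function $\eta=\eta_{n,K}$. Since $\varphi(0)=0$, also $\varphi^m(0)=0$, and the quasisymmetric inequality applied to the triple $(0,x,y)$ with $|x|=t$ and $|y|=s$ yields $|\varphi^m(x)|/|\varphi^m(y)|\le\eta(t/s)$. Taking the supremum over $|x|=t$ and the infimum over $|y|=s$ gives $M(t,\varphi^m)/m(s,\varphi^m)\le\eta(t/s)$, so one sets $C_4:=\eta(t/s)$, which exceeds $1$ because $t/s>1$ and $\eta(1)\ge 1$.

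For part (a), invoke the conjugacy to produce a qc homeomorphism $h\colon\R^n\to\R^n$ with $h(0)=0$ and $\varphi=h\circ\sigma\circ h^{-1}$, so $\varphi^N=h\circ\sigma^N\circ h^{-1}$. For $x\ne 0$ write $y=h^{-1}(x)$, which gives $|\varphi^N(x)|/|x|=|h(2^Ny)|/|h(y)|$. Applying the quasisymmetric function $\eta_h$ of $h$ from Theorem \ref{thm:qs} to the two triples $(0,2^Ny,y)$ and $(0,y,2^Ny)$, together with $h(0)=0$, yields
\[ \frac{1}{\eta_h(2^{-N})} \;\le\; \frac{|h(2^N y)|}{|h(y)|} \;\le\; \eta_h(2^N) \]
for every $y\ne 0$. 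Since $\eta_h$ is an increasing homeomorphism of $[0,\infty)$, $\eta_h(2^{-N})\to 0$ as $N\to\infty$, so the lower bound $1/\eta_h(2^{-N})$ tends to $\infty$; choose $N$ with $1/\eta_h(2^{-N})\ge 2$ and set $C_3:=\eta_h(2^N)$ (which is automatically $\ge 2$, using $\eta_h(2^N)\eta_h(2^{-N})\ge 1$). The main conceptual obstacle is that quasisymmetry of $\varphi^N$ alone does not force a uniform-in-$x$ comparison of $|\varphi^N(x)|$ with $|x|$, since the quasisymmetric function need not be comparable to the identity; the role of the conjugacy is precisely to replace iteration of $\varphi$ by geometric scaling by $2^N$, after which a single application of quasisymmetry to $h$ provides the required uniform estimates.
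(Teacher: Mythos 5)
Your proof of part (b) is essentially the paper's proof: both apply Theorem \ref{thm:qs} to $\varphi^m$ at the triple $(0,x,y)$ with $|x|=t$, $|y|=s$, using $\varphi^m(0)=0$ and the uniformity of $K$ across iterates, to obtain $C_4 = \eta(t/s)$.

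Your proof of part (a) is correct but takes a genuinely different route. You invoke the quasiconformal conjugacy $\varphi = h\circ\sigma\circ h^{-1}$ with $\sigma(x)=2x$ (recalled in the paper after \cite{Fletcher}), reduce the uniform two-sided bound on $|\varphi^N(x)|/|x|$ to the uniform quasisymmetry of the single map $h$, and then exploit that $\eta_h(2^{-N})\to 0$ to get the lower bound $2$ while $\eta_h(2^N)$ gives the upper bound $C_3$. The paper deliberately avoids this conjugacy --- it announces in the introduction that, unlike \cite{Fletcher}, it ``works with the generalized derivative itself'' --- and instead proves the lower bound via a normal-families compactness argument: if $M(r,\varphi)/r$ had infimum $1$, a rescaling $\varphi(r_kx)/r_k$ would converge to a map $\varphi_0\in\mathcal{D}f(0)$ with $M(1,\varphi_0)=1$, contradicting \cite[Lemma 4.4]{HMM}; the quasisymmetric bound \eqref{eq:phi2} then converts this to a lower bound on $m(r,\varphi^m)$. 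The upper bound in the paper comes from the local Lipschitz estimate \cite[Lemma 4.1]{HMM} passed to the limit defining $\varphi$. Your route is shorter and more self-evidently uniform once the conjugacy theorem is granted, but it imports that non-trivial structural result (loxodromic repelling uqc maps are qc-conjugate to a dilation); the paper's argument is longer but works directly from the definition of the infinitesimal space via Lemma \ref{lem:rescale}, staying closer to the spirit the authors explicitly adopted. Both are valid; yours is a clean alternative proof of this lemma in isolation.
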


Before proving this lemma it is worth pointing out the $N$ in part (i) is necessary, as the following example shows.

\begin{example}
Define the quasiconformal maps $g_1,g_2$ in $\C$ as follows:
\[ g_1(x+iy) = \left \{ \begin{array}{cc} x+iy, & x\geq 0, \\ \frac{x}{10} +iy, & x< 0, \end{array} \right.
\quad   g_2(x+iy) = \left \{ \begin{array}{cc} -\frac{x}{10}-iy, & x\geq 0, \\ -x -iy, & x< 0, \end{array} \right.\]
and then let 
\[ g(z) = \left \{ \begin{array}{cc} g_1(z), & |z|\leq 1, \\ g_{int}(z), & 1<|z|<2, \\ g_2(z), & |z|\geq 2 \end{array} \right. \]
where $g_{int}$ is a quasiconformal interpolation of $g_1$ and $g_2$ guaranteed by Sullivan's Annulus Theorem (see the paper of Tukia and V\"ais\"al\"a \cite{TV} for the Annulus Theorem in the quasiconformal category). Now define $f(z) = g(2g^{-1}(z))$. Then $f$ is a loxodromic repelling uqc map and $f(1) = g(2) = -1/5$ and hence $m(1,f) \leq 1/5$. Thus $m(r,f) \geq r$ does not hold for this mapping.
\end{example}

\begin{proof}[Proof of Lemma \ref{lem:varphi}]
Since $\varphi$ is loxodromically repelling, we must have $M(r,\varphi)>r$ for all $r$.
We first claim that there is $C>1$ so that 
\begin{equation}
\label{eq:phi1}
M(r,\varphi) \geq Cr
\end{equation} 
for all $r$. If this is not the case, then there is a non-negative monotonic sequence $r_k$ with either $r_k\to 0$ or $r_k\to \infty$
\[ \frac{M(r_k,\varphi)}{r_k} \to 1.\]
Consider the sequence $\varphi(r_kx) / r_k$. By standard normal family results, this has a convergent subsequence with limit $\varphi_0$. By Lemma \ref{lem:rescale}, $\varphi_0 \in \mathcal{D}f(0)$. However, $M(1,\varphi_0) = 1$, which contradicts \cite[Lemma 4.4]{HMM}, namely that if one element of $\mathcal{D}f(0)$ is loxodromic repelling, then they all must be.

Next, we apply Theorem \ref{thm:qs} to $\varphi^m$. For any $m\in \N$, if $|x|=|y|=r$, $|\varphi^m(x)| = M(r,\varphi^m)$ and $|\varphi^m(y)| = m(r,\varphi^m)$, then
\begin{equation}
\label{eq:phi2}
\frac{M(r,\varphi^m)}{m(r,\varphi^m)} \leq \eta(1).
\end{equation}
Now, by induction on $m$, \eqref{eq:phi1} implies that
\[ M(r,\varphi^m) \geq C^mr\]
for all $r$. Then \eqref{eq:phi2} implies that
\[ m(r,\varphi^m) \geq \frac{C^m r}{\eta(1)}\]
for all $r$. Hence we may choose $N$ large enough so that $C^N/\eta(1) \geq 2$ to obtain the left hand inequality in (i).

For the right hand inequality, by \cite[Lemma 4.1]{HMM}, in a neighborhood of $0$, $f$ satisfies $|f(x)| \leq L|x|$ for some $L\geq 1$. Hence in a possibly smaller neighborhood of $0$, $f^N$ satisfies $|f^N(x)| \leq L^N |x|$. The process of taking the limit of $f^N(s_kx)/s_k$ as $s_k \to 0$ to find $\varphi^N$ implies that $|\varphi^N(x)| \leq L^N |x|$ for all $x\in \R^n$. This gives the right hand inequality of (i).

For (ii), choose $x$ with $|x| =t$ and $|\varphi^m(x)| = M(t,\varphi^m)$ and choose $y$ with $|y| = s$ and $|\varphi^m(y)| = m(s,\varphi^m)$. Then, again by Theorem \ref{thm:qs}, we have
\[ \frac{M(t,\varphi^m)}{m(s,\varphi^m)} = \frac{| \varphi^m(x) - \varphi^m(0)|}{|\varphi^m(y) - \varphi^m(0)|}
 \leq \eta \left ( \frac{|x-0|}{|y-0|} \right ) = \eta \left ( \frac{t}{s} \right ).\]
This proves (ii).
\end{proof}

Finally in this section, we need the folllowing topological result.

\begin{proposition}
\label{prop:cantor}
Let $n\geq 2$ and suppose $X\subset S^n$ is a Cantor set. Then $ S^n \setminus X$ is connected.
\end{proposition}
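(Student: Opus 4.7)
The plan is to invoke Alexander duality. For any compact subset $X$ of $S^n$, Alexander duality in the form valid for arbitrary compact subsets gives the isomorphism
\[ \tilde{H}_0(S^n \setminus X;\mathbb{Z}) \cong \check{H}^{n-1}(X;\mathbb{Z}), \]
where $\check{H}^*$ denotes \v{C}ech cohomology (see e.g.\ Spanier, \emph{Algebraic Topology}, or Bredon, \emph{Topology and Geometry}). Since $S^n \setminus X$ is open in the locally path-connected space $S^n$, its connectedness is equivalent to path-connectedness and to the vanishing of reduced $0$-th singular homology. So it will suffice to show $\check{H}^{n-1}(X;\mathbb{Z}) = 0$.

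For this step I would exploit the structure of the Cantor set. As a compact, totally disconnected metrizable space, $X$ is homeomorphic to an inverse limit of finite discrete spaces $F_k$. Continuity of \v{C}ech cohomology under inverse limits of compacta gives
\[ \check{H}^{n-1}(X;\mathbb{Z}) \cong \varinjlim_k \check{H}^{n-1}(F_k;\mathbb{Z}). \]
Each $F_k$ is finite discrete, so $\check{H}^{j}(F_k;\mathbb{Z}) = 0$ for every $j \geq 1$. Since $n \geq 2$ we have $n-1 \geq 1$, and hence the direct limit is zero. Combined with Alexander duality, this gives $\tilde{H}_0(S^n \setminus X;\mathbb{Z}) = 0$, i.e., $S^n \setminus X$ is connected.

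The main obstacle is largely bookkeeping rather than conceptual: the version of Alexander duality taught in most introductory algebraic topology texts (e.g.\ Hatcher, Theorem 3.44) requires the compact subset to be locally contractible, which a Cantor set emphatically is not. One therefore has to reach for the more general form phrased in \v{C}ech cohomology, which holds for \emph{all} compact subsets of $S^n$. A purely elementary alternative would be to cite the classical theorem from dimension theory that a closed subset of $S^n$ of topological dimension at most $n-2$ cannot separate $S^n$ (Hurewicz-Wallman, \emph{Dimension Theory}, Ch.\ IV); since the Cantor set has dimension $0 \leq n-2$ whenever $n \geq 2$, this gives the conclusion directly.
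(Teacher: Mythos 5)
Your proof is correct and follows essentially the same route as the paper: Alexander duality identifying $\tilde{H}_0(S^n\setminus X)$ with the \v{C}ech cohomology $\check{H}^{n-1}(X)$, together with the vanishing of the \v{C}ech cohomology of a Cantor set in positive degrees (you establish this via inverse limits of finite discrete spaces and continuity of \v{C}ech cohomology, while the paper argues directly that arbitrarily fine disjoint open covers yield zero-dimensional nerves --- the same observation in two guises). The dimension-theory alternative you mention at the end (a closed set of dimension $\leq n-2$ cannot separate $S^n$) is a legitimate shortcut, but not the route the paper takes.
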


\begin{proof}
This result is presumably well-known, but we have been unable to find a direct reference for this fact. However,  Thurston \cite{Thurston} has an argument that runs as follows. By the Alexander Duality Theorem, the reduced $0$'th homology of $S^n \setminus X$ is equal to the reduced $(n-1)$'th \v{C}ech cohomology of $X$. Since $S^n \setminus X$ is open, and hence locally contractible, regular homology is fine for $S^n\setminus X$. However, \v{C}ech cohomology has to be invoked for $X$. 

Since $n\geq 2$, the $(n-1)$'th \v{C}ech cohomology of $X$ is trivial. This is given as an exercise in \cite[p.254]{ES}, but we note that it follows since there are arbitrarily fine open covers of $X$ which are disjoint and hence the corresponding nerve is zero-dimensional.
Consequently the reduced $0$'th homology of $S^n \setminus X$ is trivial which means $S^n \setminus X$ is connected.
\end{proof}

\section{Density of Repelling Periodic Points}

In this section, we will prove Theorem \ref{thm:density}. We will need the following fact about point separation in Cantor sets. This is presumably well-known, for example \cite[Lemma 3.1]{Whyburn} is the two dimensional version.

\begin{lemma}
\label{lem:smallnbhd}
Let $X$ be a Cantor set in $\R^n$, let $x\in X$, and let $\delta >0$. Then there exists a neighborhood $U$ of $x$ such that diam$(U) \leq \delta$ and $\partial U \cap X = \emptyset$.
\end{lemma}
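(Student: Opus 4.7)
The plan is to exploit the total disconnectedness of the Cantor set $X$ to first separate $x$ from the rest of $X$ at the topological level of $X$ itself, and then inflate that separation into an open set in $\R^n$.

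First, I would invoke the standard fact that a compact, metrizable, totally disconnected space is zero-dimensional, so its topology has a basis consisting of sets which are clopen as subsets of $X$. Applied to $X$, this produces a set $V$ that is relatively clopen in $X$ with $x\in V \subset X\cap B(x,\delta/4)$. Writing $W := X\setminus V$, both $V$ and $W$ are closed in $X$ and hence compact in $\R^n$, and they are disjoint, so $\rho := d(V,W)>0$.

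Next, I would thicken $V$ by a controlled amount. Setting $\eta := \min(\rho/2, \delta/4)$, define
\[ U := \{ y\in \R^n : d(y,V) < \eta \}. \]
This is open, contains $V$ (and therefore $x$), and every $y\in U$ satisfies $|y-x| < \delta/4 + \eta \leq \delta/2$, so $\mathrm{diam}(U) \leq \delta$. The containment $U \subset B(x,\delta/2)$ handles the diameter bound essentially for free once $\eta$ is chosen.

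The main step, and the one requiring a little care, is verifying that $\partial U \cap X = \emptyset$. Suppose toward a contradiction that $z\in \partial U\cap X$. Since $U$ is open we have $z\notin U$, so $d(z,V) \geq \eta$; on the other hand $z\in \overline{U}$, so by approximating $z$ by points $y_k\in U$ with corresponding nearby points $v_k\in V$ and using compactness of $V$ to extract a limit, $d(z,V) \leq \eta$. Thus $d(z,V) = \eta$. But $z \in X = V\sqcup W$ forces either $d(z,V)=0$ (if $z\in V$) or $d(z,V)\geq \rho > \eta$ (if $z\in W$), both contradictions.

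The genuine content is really concentrated in the first step: one must know (or prove) that compact totally disconnected metric spaces are zero-dimensional, so that clopen-in-$X$ neighborhoods of arbitrarily small diameter exist. Everything after that is a standard thickening-and-separation argument and should go through with only the bookkeeping indicated above.
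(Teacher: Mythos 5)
Your proof is correct and follows essentially the same route as the paper: both hinge on the fact that a compact, totally disconnected metric space is zero-dimensional, so $x$ has an arbitrarily small relatively clopen neighborhood in $X$, and then both separate that clopen piece from the rest of $X$ inside $\R^n$. The only difference is cosmetic: the paper invokes normality ($T_4$) of $\R^n$ to produce the separating open set, whereas you build it explicitly as a metric $\eta$-thickening of the clopen piece using the positive distance between the two disjoint compact pieces of $X$ — a slightly more concrete version of the same step. (One tiny point worth adding for completeness: if $W=X\setminus V$ happened to be empty, $\rho=d(V,W)$ is not defined; but since $X$ is perfect you can always choose $V$ proper, or simply take $\eta=\delta/4$ in that degenerate case.)
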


\begin{proof}
Let $B = \overline{B(x, \delta/2)}$. Since $X\cap B$ is totally disconnected, there exists $Y\subsetneq X\cap B$ that is clopen in $X\cap B$ such that $x\in Y$. Since $Y$ is in particular closed in $X\cap B$, there exists $C\subset B$ closed such that $C\cap (X\cap B) = Y$. Now since $Y$ is open in $X$, $X\setminus Y$ is closed in $X$, and so there exists a closed set $D\subset \R^n$ with $X\setminus Y = X\cap D$. Since $\R^n$ is $T_4$, there exist open sets $U\supset C$ and $V\supset D$ in $\R^n$ such that $U \cap V = \emptyset$. Since $B$ is also $T_4$, we can choose $U$ so that $U\subset B$, ensuring that diam$(U) \leq \delta$. Since $U\cap V = \emptyset$, and $U$ and $V$ are both open, we have that $\overline{U} \cap V = \emptyset$. Since $Y\subset U$ and $X\setminus Y \subset V$, we see that $\partial U \cap X = \emptyset$.
\end{proof}

\begin{proof}[Proof of Theorem \ref{thm:density}]
Suppose that $f:\R^n \to \R^n$ is a $K$-uqr map of polynomial type and that $J(f)$ is a Cantor set. Propositon \ref{prop:cantor} and the fact that $J(f)$ is compact in $\R^n$ give that $F(f)$ is connected. Since $I(f)$ is an open connected neighborhood of infinity (see \cite{FN}), we must have that $F(f) = I(f)$.

By a result from Siebert's thesis \cite[Satz 4.3.4]{Siebert}, $J(f)$ is contained in the closure of the set of periodic points of $f$. Let $x_0$ be such a periodic point, say of period $p$. Since $F(f) = I(f)$, $x_0$ cannot be in $F(f)$, and so we have that $x_0 \in J(f)$. Henceforth, write $F = f^p$ so that $x_0$ is a fixed point of $F$.

Now let $R >0$ be such that $F$ is injective on $B(x_0, R)$. That such an $R$ exists follows from the fact that otherwise $x_0$ would be super-attracting by Definition \ref{def:fix}.

Since $J(F) = J(f)$ and $F(F) = F(f)$, we have by Lemma \ref{lem:smallnbhd} that for any small $\delta >0$, there exists a neighborhood $U_\delta$ of $x_0$ such that $\gamma_\delta = \partial U_\delta$ is contained in $I(F) \cap A(x_0, \delta/2, \delta)$. This follows since $J(F) = \partial I(F)$, and again since $I(F) = F(F)$.

We define $L(x_0,r,F) = \max\{\vert F(x) - x_0\vert:\vert x-x_0\vert =r \}$ and $l(x_0,r,F) = \min\{\vert F(x) - x_0\vert:\vert x-x_0\vert =r \}$. Furthermore, we denote by $L_m$ and $l_m$ the maximum and minimum of $\{\vert F^m(x) - x_0\vert: x \in \gamma_\delta \}$, respectively. Note first that
\[L_m \leq L(x_0, \delta, F^m), \quad l_m \geq l(x_0, \delta, F^m). \]
Additionally, note that, as long as $F^{m-1}(U_\delta) \subset B(x_0, R)$, we have that $F^m$ is $K$-quasiconformal on $U_\delta$. Hence there exists $C^* >1$ depending only on $m$ and $K$ such that
\begin{equation}\label{eq:qcgrowth}
\frac{L(x_0, \delta, F^m)}{l(x_0, \delta, F^m)} \leq C^*.
\end{equation}
Hence $L_m/l_m \leq C^*$.

Now find $C>1$ sufficiently large so that the forward orbit of any $x\in B(x_0, R/C^*)\cap I(F)$ must first pass through $A(x_0, R/(CC^*), R/C^*)$ before leaving $B(x_0, R/C^*)$. Let $\delta < R/(C(C^*)^2)$, and find $M$ minimal so that $F^M(\gamma_\delta) \cap A(x_0, R/(CC^*), R/C^*)$ is non-empty. By \eqref{eq:qcgrowth}, we then have that
\[F^M(\gamma_\delta) \subset A(x_0, R/(C(C^*)^2), R), \]
and hence that $\overline{U_\delta} \subset F^M(U_\delta)$. By the topological definition of repelling fixed points, recall the note after Definition \ref{def:fix}, $x_0$ is a repelling fixed point of $F^M$. Hence, by \cite[Proposition 4.6]{HMM}, $x_0$ is a repelling periodic point of $f$.
\end{proof}

\section{Spiders' webs}

In this section we prove Theorem \ref{thm:webs}. The method is based on that of Mihaljevi\'{c}-Brandt and Peter \cite{MBP}, but more technical difficulties need to be overcome, mainly due to linearizing to a uniformly quasiconformal map and not necessarily a dilation. We can replace $f$ by $F:=f^N$ so that the conclusions of Lemma \ref{lem:varphi} hold with $\Phi := \varphi^N$ and, moreover, the degree $d$ of $F$ is larger than $K$. Then $F:\R^3 \to \R^3$ is a $K$-uqr map and we have $F\circ L = L\circ \Phi$.

We first prove some preliminary lemmas on the growth of $L$.

\begin{lemma}
\label{lem:reggrowth}
Recall $C_3$ from Lemma \ref{lem:varphi} applied to $\Phi$.
There exists $R_1>0$ such that if $r>R_1$ then
\[ \log M(C_3^mr,L) \geq \log M(r,L) \prod _{i=1}^{m-1} \left ( \left ( \frac{d}{K} \right ) ^{1/2} + \frac{ \log C_1}{\log M(C_3^ir,L) } \right ).\]
\end{lemma}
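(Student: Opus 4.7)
The plan is to combine the functional equation $F \circ L = L \circ \Phi$ with the upper bound $|\Phi(x)| \leq C_3 |x|$ from Lemma \ref{lem:varphi}(a) and the lower bound on $|F(y)|$ from Lemma \ref{lem:holderinf} applied with $n = 3$ and $j = 1$, obtaining a single-step estimate on $\log M(C_3^i r, L)$ that telescopes into the desired product.

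First, I would fix $R_1$ large enough that for any $r > R_1$ we have $M(r, L) > R_0$, where $R_0$ is the constant from Lemma \ref{lem:holderinf} applied to $F$; this is possible since $L$ is of transcendental type and hence $M(r, L) \to \infty$ as $r \to \infty$. For any $s \geq r$, choose $x$ with $|x| = s$ and $|L(x)| = M(s, L)$. Noting that $q_1(y) \equiv 1$, Lemma \ref{lem:holderinf} gives $|F(L(x))| \geq C_1 \, M(s, L)^{(d/K)^{1/2}}$. On the other hand, the linearization rewrites $F(L(x)) = L(\Phi(x))$, while Lemma \ref{lem:varphi}(a) gives $|\Phi(x)| \leq C_3 s$, and hence $|L(\Phi(x))| \leq M(C_3 s, L)$. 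Taking logarithms yields the one-step recurrence
\[ \log M(C_3 s, L) \geq (d/K)^{1/2} \log M(s, L) + \log C_1. \]

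Setting $b_i := \log M(C_3^i r, L)$, this reads $b_i \geq b_{i-1} \bigl( (d/K)^{1/2} + \log C_1 / b_{i-1} \bigr)$ for each $i \geq 1$. Telescoping the ratios $b_i / b_{i-1}$ from $i = 2$ to $i = m$ (valid as long as each $b_{i-1} > 0$) gives
\[ b_m \geq b_1 \prod_{i=1}^{m-1} \left( (d/K)^{1/2} + \frac{\log C_1}{b_i} \right), \]
and since $C_3 > 1$ and $r \mapsto M(r, L)$ is increasing we have $b_1 \geq b_0$, so replacing $b_1$ by $b_0$ produces the claimed inequality.

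The only real technical care lies in the choice of $R_1$, which must be large enough to guarantee three things at once: (a) $M(r, L) > R_0$ at every iterate, which follows by monotonicity once it holds at $i = 0$; (b) $b_i > 0$ for all $i \geq 0$, so that dividing by $b_{i-1}$ is legitimate; and (c) each factor $(d/K)^{1/2} + \log C_1 / b_i$ is nonnegative, which is precisely the condition under which replacing $b_1$ by the smaller $b_0$ in the last step genuinely weakens the bound (a concern only if $C_1 < 1$). All three conditions hold once $b_0 = \log M(r, L)$ is sufficiently large, which is arranged by taking $r > R_1$ for $R_1$ large enough, since $L$ is transcendental.
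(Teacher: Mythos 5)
Your proof is correct and follows the same route as the paper: derive the one-step inequality $\log M(C_3 s, L) \geq (d/K)^{1/2}\log M(s,L) + \log C_1$ from the functional equation $F\circ L = L\circ\Phi$, the bound $|\Phi(x)|\leq C_3|x|$, and Lemma \ref{lem:holderinf} with $j=1$, and then iterate. Your telescoping of $b_i/b_{i-1}$ and the care taken with the sign of $\log C_1$ and the monotonicity step $b_1\geq b_0$ are slightly more explicit than the paper's ``the result then follows by induction,'' but the content is identical; incidentally, iterating directly from $b_0$ gives the marginally stronger product $\prod_{i=0}^{m-1}$, which is the form actually invoked later in the proof of Lemma \ref{lem:reggrowth2}.
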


\begin{proof}
Let $r$ be large and $y \in S_r$ such that $|L(y)| \geq |L(x)|$ for all $x \in S_r$. Let $w = L(y)$ so that $|w| = M(r,L)$. Then by the functional equation $F\circ L = L\circ \Phi$ and  Lemma \ref{lem:holderinf},
\begin{align*}
\log M(C_3r,L) & \geq \log M(r, L\circ \Phi)\\
&= \log M(r,F \circ L) \\
&= \log M( L(S_r),F )\\
& \geq \log |F(w)| \\
& \geq \log C_1 +(d/K)^{1/2} \log |w| \\
&= \log C_1 + (d/K)^{1/2} \log M(r,L).
\end{align*}
The result then follows by induction.
\end{proof}

\begin{lemma}
\label{lem:reggrowth2}
Let $\mu >1$ and recall $C_3$ from Lemma \ref{lem:varphi} applied to $\Phi$. There exist $R_2>0$ and $M\in \N$ such that for any $R>R_2$, the sequence defined by
\begin{equation}\label{eq:reg1} 
r_m = C_3^m M^m(R,L)
\end{equation}
satisfies
\[ M(r_m,L) > r_{m+1}^{\mu},\]
for $m\geq M$.
\end{lemma}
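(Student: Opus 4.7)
The plan is to apply Lemma \ref{lem:reggrowth} with $r = s_m := M^m(R, L)$, since then $C_3^m s_m = r_m$. This yields
\[ \log M(r_m, L) \geq \log M(s_m, L) \prod_{i=1}^{m-1} \left( (d/K)^{1/2} + \frac{\log C_1}{\log M(C_3^i s_m, L)} \right). \]
Fix any $\beta \in (1, (d/K)^{1/2})$, which is possible because $d > K$. Since $L$ is transcendental type, $s_m$ is increasing in $m$ for $R$ large, and combined with $C_3 > 1$ this gives $C_3^i s_m \geq C_3 M(R, L)$ for all $m, i \geq 1$. Thus $\log M(C_3^i s_m, L) \geq \log M(C_3 M(R, L), L)$, which can be made arbitrarily large by enlarging $R$. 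Choose $R_2$ so large that for $R > R_2$ the absolute value of the correction term in each factor is at most $(d/K)^{1/2} - \beta$; this forces every factor to be at least $\beta$, yielding
\[ \log M(r_m, L) \geq \beta^{m-1} \log M(s_m, L). \]

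Using $\log r_{m+1} = (m+1)\log C_3 + \log M(s_m, L)$, the desired inequality $M(r_m, L) > r_{m+1}^{\mu}$ rearranges to
\[ (\beta^{m-1} - \mu) \log M(s_m, L) > \mu(m+1) \log C_3. \]
After possibly enlarging $R_2$ so that $M(R_2, L) > 1$, we have $\log M(s_m, L) \geq \log M(R_2, L) =: C^* > 0$, uniformly in $R > R_2$ and in $m \geq 1$. Since $\beta^{m-1}$ grows exponentially in $m$ while the right-hand side grows linearly, the inequality holds for all $m \geq M$, where $M \in \N$ depends only on $\beta$, $\mu$, $C_3$, and $C^*$, and not on the particular choice of $R > R_2$. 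This gives the claim.

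The main obstacle is obtaining the clean exponential factor $\beta^{m-1}$ uniformly in $m$: the correction terms $\log C_1 / \log M(C_3^i s_m, L)$ in Lemma \ref{lem:reggrowth} could in principle accumulate and spoil the bound if $C_1 < 1$, but they are all controlled by the smallest of the quantities $\log M(C_3^i s_m, L)$ (namely the one at $i = m = 1$), so a single choice of $R_2$ suffices to keep them uniformly negligible.
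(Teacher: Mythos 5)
Your proof is correct and follows essentially the same route as the paper: both apply Lemma~\ref{lem:reggrowth} with $r = M^m(R,L)$, bound the resulting product from below by an exponential in $m$, compare against $\log r_{m+1}^{\mu} = \mu(m+1)\log C_3 + \mu \log M^{m+1}(R,L)$, and use that the exponential dominates the linear term. The one small stylistic improvement in your version is fixing $\beta$ strictly between $1$ and $(d/K)^{1/2}$ in advance and choosing $R_2$ so that every factor exceeds $\beta$ uniformly; this makes the uniformity of $M$ over all $R>R_2$ transparent, whereas the paper keeps $\beta = (d/K)^{1/2}$ and absorbs the correction into a factor $\bigl(\beta - |\log C_1|/\log R\bigr)^m$ before arguing more informally that the inequality holds for ``$R$ large enough and $m$ large enough.''
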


\begin{proof}
Assume $R$ is large. Set $\beta = (d/K)^{1/2}$.
With the sequence $r_m$ defined by \eqref{eq:reg1}, applying Lemma \ref{lem:reggrowth} with $r = M^m(R,L)$ yields
\begin{align*}
\log M(r_m,L) & = \log M(C_3^mM^m(R,L),L) \\
& \geq \prod _{i=0}^{m-1} \left ( \beta + \frac {\log C_1 }{\log M(C_3^i M^m (R,L) , L) } \right ) \cdot \log M( M^m(R,L) , L) \\
&\geq \left ( \beta - \frac{ \log C_1}{\log R} \right ) ^m \log M^{m+1}(R,L).
\end{align*}
Now,
\begin{align*}
\log r_{m+1}^{\mu} & = \mu \log ( C_3^{m+1} M^{m+1}(R,L) ) \\
&= \mu (m+1) \log C_3 + \mu \log M^{m+1}(R,L).
\end{align*}
Hence the result is true if
\[ \log M^{m+1}(R,L) \left (  \left ( \beta - \frac{\log C_1}{\log R} \right )^m - \mu \right ) > \mu (m+1) \log C_3.\]
Since $\mu(m+1) \log C_3$ is much smaller than $\log M^{m+1}(R,L)$ for large $m$,
this is so if we choose $R$ large enough and $m$ large enough so that $\beta^m > \mu$.
\end{proof}

\begin{lemma}
\label{lem:reggrowth3}
Let 
\begin{equation}
\label{eq:mu}
\mu > \left ( \frac{\log C_3}{\log 2} \right ) \left ( \frac{\log d + \log K}{\log d - \log K}\right ). 
\end{equation}
There exists $R_3>0$ such that for $r>R_3$ there is a continuum $\Gamma^r$ separating $S_r$ and $S_{r^{\mu}}$ such that
\[ m(\Gamma^r,L) > M(r,L).\]
\end{lemma}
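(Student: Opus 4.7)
I will construct $\Gamma^r$ as the image of a sphere under an appropriate iterate of $\Phi$, relying on the functional equation $F \circ L = L \circ \Phi$ to transport estimates from iterates of $\Phi$ (whose growth is controlled by Lemma \ref{lem:varphi}) to iterates of the polynomial-type map $F$ (whose growth is controlled by Lemma \ref{lem:holderinf}). Choose $m = m(r)$ to be the largest positive integer satisfying $C_3^m r < r^\mu$, that is, $m \sim (\mu - 1)\log r/\log C_3$. By Lemma \ref{lem:varphi}(a), $\Phi^m(S_r) \subset A(2^m r, C_3^m r) \subset A(r, r^\mu)$, and since $\Phi^m$ is a homeomorphism of $\R^3$ fixing the origin, $\Gamma^r := \Phi^m(S_r)$ is a topological sphere separating $0$ from $\infty$, and hence separating $S_r$ from $S_{r^\mu}$.

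For the estimate $m(\Gamma^r,L) > M(r,L)$, any $y \in \Gamma^r$ equals $\Phi^m(x)$ for some $x \in S_r$, and the functional equation gives $L(y) = F^m(L(x))$. Applying Lemma \ref{lem:holderinf} to $F^m$ (valid once $|L(x)| > R_0$) yields
\[ |L(y)| \geq C_1^{q_m(\beta)} |L(x)|^{\beta^m}, \quad \beta = (d/K)^{1/2}. \]
Taking the infimum over $x \in S_r$, the plan is to push this past $M(r,L)$ using the known growth of $L$. Iterating Lemma \ref{lem:reggrowth} gives a lower bound $\log M(r,L) \gtrsim r^{\log\beta/\log C_3}$, and a parallel argument employing the functional equation in the form $L = F \circ L \circ \Phi^{-1}$ together with the upper half of Lemma \ref{lem:holderinf} yields $\log M(r,L) \lesssim r^{\log \beta'/\log 2}$, where $\beta' = (dK)^{1/2}$. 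Since $\beta^m \sim r^{(\mu - 1)\log\beta/\log C_3}$, the hypothesis on $\mu$ rearranges to $\mu \log\beta/\log C_3 > \log\beta'/\log 2$, which is exactly what is needed for the exponential factor $\beta^m$ to dominate $\log M(r,L)$.

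The main obstacle is the lower bound on $m(r,L)$ needed to make the preceding inequality meaningful: unlike $\Phi$, which is uniformly quasiconformal and so satisfies $M(r,\Phi^m)/m(r,\Phi^m) \leq C_4$ by Lemma \ref{lem:varphi}(b), the linearizer $L$ is only quasiregular and a priori admits wildly different maximum and minimum moduli. The plan is to prove a dual version of Lemma \ref{lem:reggrowth}, applied with $\Phi^{-1}$ in place of $\Phi$, giving $\log m(r,L) \gtrsim r^{\log\beta/\log C_3}$ and matching the lower bound on $\log M(r,L)$ to within a constant factor on the exponent. Alternatively, if this pointwise lower bound fails, one can refine $\Gamma^r$ to a component of a preimage $L^{-1}(S_R)$ with $R$ slightly exceeding $M(r,L)$, chosen so as to separate the spheres by a maximum-principle argument. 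Either way, the scaling $\mu > (\log C_3/\log 2)(\log(dK)/\log(d/K))$ in the hypothesis is precisely what is dictated by balancing these exponents.
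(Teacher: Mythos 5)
Your plan has a genuine gap that cannot be repaired along the lines you suggest, and it centers on the minimum modulus $m(r,L)$, which you correctly identify as the obstacle but whose severity you underestimate.

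The issue is not merely that a lower bound $\log m(r,L)\gtrsim r^c$ is difficult to prove --- it is false. If $x\in S_r$ satisfies $L(x)\in J(f)$, then since $J(f)$ is compact and forward-invariant under $F$, the orbit $F^m(L(x))$ stays in $J(f)$ for all $m$, and by the functional equation $L(\Phi^m(x))=F^m(L(x))$ is bounded by $\max\{|y|:y\in J(f)\}$ independently of $m$. The set $L^{-1}(J(f))$ is unbounded (it accumulates at $\infty$ because $L$ is of transcendental type), so for unboundedly many $r$ the sphere $S_r$ meets $L^{-1}(J(f))$; for such $r$, your $\Gamma^r=\Phi^m(S_r)$ has $m(\Gamma^r,L)$ bounded by a constant, while $M(r,L)\to\infty$. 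Thus the conclusion fails, and no amount of tuning $m$ or $\mu$ fixes it. For the same reason, the ``dual version of Lemma \ref{lem:reggrowth}'' for $m(r,L)$ cannot hold, and the alternative suggestion of passing to a component of $L^{-1}(S_R)$ is too vague to close the gap (those components need not separate $S_r$ from $S_{r^\mu}$, nor need they avoid $L^{-1}(J(f))$).

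The paper's proof sidesteps this entirely by not using spheres in the domain of $L$. It picks a small continuum $\gamma_\delta\subset B(x_0,\delta)\cap F(f)$ separating $x_0$ from $\infty$ (available because $J(f)$ is a Cantor set with a defining sequence), pulls it back through a local inverse of $L$ near $0$ to get $\Gamma_\delta\subset A(s,t)$ with $L(\Gamma_\delta)=\gamma_\delta$, and then defines $\Gamma^r=\Phi^{p_2}(\Gamma_\delta)$ for a carefully chosen exponent $p_2$. Because $\gamma_\delta$ lies entirely in the Fatou set $F(f)=I(f)$, there is a \emph{fixed} integer $j$ with $F^j(\gamma_\delta)\subset\{|x|>R_0\}$, after which Lemma \ref{lem:holderinf} applies uniformly on the whole continuum; this replaces the uncontrolled quantity $m(r,L)$ by $m(\gamma_\delta,F^{p_2})\geq m(R_0,F^{p_2-j})$, which has honest polynomial-type growth. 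The exponent comparison you carried out for $M(r,L)$ versus the forward growth is essentially correct and matches the paper's bookkeeping with $p_1$ minimal and $p_2$ maximal so that $\Phi^{p_i}(A(s,t))\subset A(r,r^\mu)$, and the hypothesis on $\mu$ enters exactly as you predict. The missing ingredient is the choice of a domain continuum whose $L$-image lies in the escaping set --- this is also what makes the ``Moreover'' part of Theorem \ref{thm:webs} work, since $\Gamma^r$ is then homeomorphic to $\partial M_i^{x_0}$.
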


\begin{proof}
There exists a neighborhood $U$ of $0$ such that $L |_{U}$ is injective, and we may assume $U \subset \B^3$. Let $\delta >0 $ be small enough that $B(x_0,\delta) \subset L(U)$. Since $J(f)$ is a Cantor set with defining sequence, there exists a continuum $\gamma_{\delta} \subset B(x_0,\delta)\cap F(f)$ so that $\gamma_{\delta}$
separates $x_0$ and infinity.
Let $\Gamma _{\delta} = L^{-1}(\gamma_{\delta}) \cap U$. Then $\Gamma_{\delta}$ is a continuum which separates $0$ from infinity. Find $0<s<t<1$ so that $\Gamma_{\delta} \subset A(s,t) := \{x:s<|x|<t\}$.

Let $r$ be large, exactly how large will be specified later. Since $\Phi^m$ is $K$-quasiconformal for all $m$, the modulus of the ring domain $\Phi^m(A(s,t))$ is uniformly bounded. 
By taking $r$ as large as we like, we can make the modulus of $A(r,r^{\mu})$ arbitrarily large. Hence by Lemma \ref{lem:varphi} we can guarantee for large $r$ there exists some $m\in \N$ so that $\Phi^m(A(s,t)) \subset A(r,r^{\mu})$. We want to estimate how many iterates of $\Phi$ we need to apply to ensure the image of $A(s,t)$ is contained in $A(r,r^{\mu})$.

Find $p_1$ minimal so that $\Phi^{p_1} (A(s,t)) \subset A(r,r^{\mu})$. By Lemma \ref{lem:varphi} (i), we can guarantee that $\Phi^{p_1}(S_s)$ meets $A(r,C_3r)$, otherwise $p_1$ would not be minimal. Hence we actually have $\Phi^{p_1}(A(s,t)) \subset A(r,C_3C_4r)$ by Lemma \ref{lem:varphi} (ii). Since this construction requires $M(t,\Phi^{p_1}) \leq C_3C_4 r$, Lemma \ref{lem:varphi} (i) again implies that
\begin{equation}
\label{eq:proof1}
2^{p_1}t \leq C_3C_4 r.
\end{equation}
Similarly, find $p_2$ maximal so that $\Phi^{p_2}(A(s,t)) \subset A(r,r^{\mu})$. As above, the fact that $p_2$ is chosen maximal means that $\Phi^{p_2} (A(s,t)) \subset A(r^{\mu}/(C_3C_4) , r^{\mu} )$. Since this requires $m(s,\Phi^{p_2}) \geq r^{\mu} / (C_3C_4)$, Lemma \ref{lem:varphi} (i) implies that
\begin{equation}
\label{eq:proof2}
C_3^{p_2}s \geq \frac{r^{\mu}}{C_3C_4}.
\end{equation}
From \eqref{eq:proof1} and \eqref{eq:proof2}, we conclude by taking logarithms that
\begin{equation}
\label{eq:proof3}
p_2 \geq p_1 \left ( \frac{\mu \log 2}{\log C_3} \right) +O(1).
\end{equation}
Here, $p_1,p_2$ depend on $r$, and we write $O(1)$ here and below for constants independent of $r$.

We clearly need $r$ large enough for these constructions to make sense, and in particular we require $A(r.C_3C_4r)$ and $A(r^{\mu} / (C_3C_4) , r^{\mu} )$ to be disjoint. This is certainly true as long as 
\[ C_3C_4 r < \frac{r^{\mu}}{C_3C_4},\]
that is if
\[ r > (C_3C_4)^{2/(\mu -1)}.\]

Next, find $R_0>0$ large enough so that Lemma \ref{lem:holderinf} may be applied to iterates of $F$. Find $j\in \N$ minimal so that $F^{j}(\gamma_{\delta}) \subset \{ x : |x| > R_0 \}$. Define $\Gamma^r := \Phi^{p_2}(\Gamma_{\delta})$. It follows immediately from the fact that $\Phi^{p_2}(A(s,t)) \subset A(r,r^{\mu})$ that $\Gamma^r$ separates $S_r$ and $S_{r^{\mu}}$.

We first estimate the minimum modulus on $\Gamma^r$:
\begin{align*}
\log m(\Gamma^r,L) & = \log m( \Gamma^r, F^{p_2} \circ L \circ \Phi^{-p_2} ) \\
& = \log m( \Gamma_{\delta}, F^{p_2} \circ L ) \\
& = \log m( \gamma_{\delta}, F^{p_2} ) \\
& \geq \log m(R_0, F^{p_2 - j} ) \\
& \geq q_{p_2 - j} ( (d/K)^{1/2} ) \log C_1 + (d/K)^{(p_2 - j)/2} \log R_0.
\end{align*}
Next,
\begin{align*}
\log M(r,L) &= \log M(r, F^{p_1} \circ L \circ \Phi^{-p_1} )\\
&= \log M( \Phi^{-p_1}(S_r) , F^{p_1} \circ L ) \\
&\leq \log M(1, F^{p_1 }\circ L ) \\
&\leq \log M(R_0, F^{p_1} )\\
& \leq q_{p_1}( (dK)^{1/2} ) \log C_2 + (dK)^{p_1 / 2} \log R_0 .
\end{align*}

We may assume $N$ is chosen large enough so that $(dK)^{1/2} \geq 2$.
Using $y^{j-1} \leq q_j(y) \leq y^j$, for $y\geq 2$, \eqref{eq:proof3} and writing $\mu_1 = \mu \log 2 / \log C_3$, these two chains of inequalities imply that we require
\[ \left ( \frac{d}{K} \right )^{(p_1\mu_1)/2 +O(1)}\log C_1 + \left ( \frac{d}{K} \right)^{(p_1\mu_1)/2 +O(1)} \log R_0 \geq (dK)^{p_1/2} \log(C_2R_0).\]
Taking logarithms, this reduces to
\[ p_1 \left ( \frac{\mu_1 \log(d/K) }{2} - \frac{ \log(dK) }{2} \right ) \geq O(1).\]
By \eqref{eq:mu}, the second factor on the left hand side is strictly positive, which means that for large enough $r$, and hence large enough $p_1$, this inequality is satisfied.
\end{proof}

Finally, we can show that $A(f)$ contains continua of the same topological type as $\gamma_{\delta}$.

\begin{proof}[Proof of Theorem \ref{thm:webs}]
Recalling Lemmas \ref{lem:holderinf}, \ref{lem:reggrowth}, \ref{lem:reggrowth2},  let $R>\max \{ R_0,R_1,R_2,R_3 \}$ and for $m \in \N$, let $r_m = C_3^m M^{m}(R,L)$ and let $\mu$ satisfy \eqref{eq:mu}. By the construction above, there is a continuum $\Gamma^{r_m}$ separating $S_{r_m}$ and $S_{r_{m}^{\mu}}$ such that
\[ m(\Gamma^{r_m}, L) >M(r_m,L).\]
We define $G_m$ to be the interior of $\Gamma^{r_m}$. Then by construction, every $G_m$ is a bounded topologically convex domain with
\[ G_m \supset \{ x \in \R^3 : |x| <r_m \} \supset \{ x \in \R^3 : |x| <M^m(R,L) \}.\]
Further, it follows from Lemma \ref{lem:reggrowth2} that
\[ m(\partial G_m, L) = m(\Gamma^{r_m},L) > M(r_m,L) > r_{m+1}^{\mu} > \max _{x \in \partial G_{m+1} } |x|,\]
and hence $G_{m+1}$ is contained in a bounded component of $\R^3 \setminus L(\partial G_m )$ and we have fulfilled the conditions of Lemma \ref{lem:afchar} for $A(L)$ to be a spiders' web.

The final statement of the theorem is immediate by choosing $\gamma_{\delta}$ appropriately.
\end{proof}


\begin{thebibliography}{widest-label}

\bibitem{An} L. Antoine,
Sur l'homeomorphisme de deux figures et leurs voisinages,
{\it Journal Math Pures et appl.}, {\bf 4} (1921), 221-325.

\bibitem{Ar} S. Armentrout, Decompositions of $E^3$ with a compact $0$-dimensional set of non-degenerate elements,
{\it Trans. Amer. Math. Soc.}, {\bf 123} (1966), 165-177.

\bibitem{BDF} W. Bergweiler, D. Drasin, A. Fletcher, The fast escaping set for quasiregular mappings, 
{\it Anal. Math. Phys.}, {\bf 4} (2014), 83-98.

\bibitem{BFN} W. Bergweiler, A. Fletcher, D. Nicks,
The Julia set and the fast escaping set of a quasiregular mapping, 
{\it Comp. Meth. Func. Th.}, {\bf 14}, no.2 (2014), 209-218.

\bibitem{BN} W. Bergweiler, D. Nicks, Foundations for an iteration theory of entire quasiregular maps, 
{\it Israel J. Math.}, {\bf 201} (2014), 147-184.

\bibitem{Devaney}
R. L. Devaney, Cantor bouquets, explosions and Knaster continua: dynamics of complex exponentials,
{\it Publ. Mat.}, {\bf 43} (1999), no.1, 27-54.

\bibitem{ES}
S. Eilenberg, N. Steenrod,
{\it Foundations of algenraic topology}, Princeton University Press, 1952.

\bibitem{Evdoridou}
V. Evdoridou,
Fatou's web,
{\it Proc. Amer. Math. Soc.}, {\bf 144} (2016), no. 12, 5227-5240.

\bibitem{Fletcher} A. Fletcher,
Poincare linearizers in higher dimensions, 
{\it Proc. Amer. Math. Soc.}, {\bf 143} (2015), 2543-2557.

\bibitem{FN} A. Fletcher, D. Nicks,
Quasiregular dynamics on the $n$-sphere,
{\it Ergodic Theory and Dynam. Systems,} {\bf 31} (2011), no. 1, 23-31

\bibitem{FW}
A. Fletcher, J.-M. Wu,
Julia sets and wild Cantor sets, {\it Geom. Ded.}, {\bf 174}, no.1 (2015), 169-176.

\bibitem{Heinonen}
J. Heinonen,
{\it Analysis on metric spaces}, Springer-Verlag, New York, 2001.

\bibitem{HMM} A. Hinkkanen, G. J. Martin, V. Mayer,
Local dynamics of uniformly quasiregular mappings,
{\it Math. Scand.}, {\bf 95} (2004), no. 1, 80-100.

\bibitem{IM} T. Iwaniec, G. J. Martin, Quasiregular semigroups, {\it Ann. Acad. Sci. Fenn.}, {\bf 21} (1996), 241-254.

\bibitem{MBP} H. Mihaljevi\'{c}-Brandt, J. Peter, Poincar\'{e} functions with spiders' webs,
{\it Proc. Amer. Math. Soc.} {\bf 140} (2012), 3193-3205. 

\bibitem{Moise}
E. E. Moise,
{\it Geometric Topology in Dimensions $2$ and $3$},
Graduate Texts in Mathematics, {\bf 47}, Spriner-Verlag, New York, 1977.

\bibitem{Osborne}
J. W. Osborne,
Spiders' webs and locally connected Julia sets of transcendental entire functions,
{\it Erg. Th. Dyn. Sys.}, {\bf 33} (2013), no. 4, 1146-1161.

\bibitem{Rickman}
S. Rickman,
{\it Quasiregular mappings}, Ergebnisse der Mathematik und ihrer
Grenzgebiete 26, Springer, 1993.

\bibitem{RS}
P. Rippon, G. Stallard, 
Fast escaping points of entire functions,
{\it Proc. London Math. Soc.}, {\bf 105} (2012), 787-820.

\bibitem{Siebert} H. Siebert, {\it Fixpunkte und normale Familien quasiregul\"{a}rer Abbildungen}, Dissertation, CAU Kiel, 2004.


\bibitem{Sixsmith}
D. J. Sixsmith,
Julia and escaping set spiders' webs of positive area,
{\it Int. Math. Res. Not. IMRN 2015}, no. 19, 9751-9774. 

\bibitem{Thurston}
W. Thurston, Stack Exchange discussion,

\tiny
{\tt https://mathoverflow.net/questions/55718/complement-of-a-totally-disconnected-closed-set-in-the-plane}
\normalsize

\bibitem{TV}
P. Tukia and J. V\"ais\"ala,
Lipschitz and quasiconformal approximation and extension,
{\it Ann. Acad. Sci. Fenn. Ser. A I Math.}, 6(2):303-342, 1982.

\bibitem{Whyburn}
G. T. Whyburn,
{\it Topological Analysis},
Princeton Mathematical Series, No. 23, 1964.

\bibitem{Z} M. \v{Z}eljko,
Genus of a Cantor set,
{\it Rocky Mount. J. Math.}, {\bf 35} (2005), no.1, 349-366.


\end{thebibliography}
\end{document}